\newtheorem{thm}{Theorem}[section]
\newtheorem{lem}[thm]{Lemma}
\newtheorem{cor}[thm]{Corollary}
\def\esup{\mathop{\rm ess\,sup}}
\def\loc{\text{\rm loc}}
\def\Re{\mathbb{R}}
\def\Na{\mathbb{N}}
\def\leb{{\cal L}^1}
\def\supp{\mathop{\rm supp}}
\def\strong{\mathrm{s}}
\def\weak{\mathrm{w}}
\def\Ch#1{\mathrm{Ch}_{{#1},w}(I)}
\def\Cs#1{\mathrm{Cs}_{{#1},w}(I)}
\def\Chx#1{\mathrm{Ch}_{{#1},w_0}(I_0)}
\def\Csx#1{\mathrm{Cs}_{{#1},w_0}(I_0)}
\def\pio#1{\pi(#1)}
\def\ef#1{\mathbf{#1}}
\begin{document}

\begin{center}
 \sc\Large On the associated spaces for the weighted  altered Ces\`aro space $\Ch{\infty}$
\end{center}

\begin{center}
Dmitrii V. Prokhorov\\
Computing Center of the Far Eastern Branch\\
of the Russian Academy of Sciences\\
680000 Russia, Khabarovsk, Kim Yu Chena 65\\
prohorov@as.khb.ru
\end{center}

\noindent{\bf Abstract:} We study weigted altered Ces\`aro space $\Ch{\infty}$, which is non-ideal enlargement of the usual Ces\`aro space. We prove the connection of the space with one weighted Sobolev space of first order on real line and give characterization of associate spaces for the space.
\vskip 2mm

\noindent{\bf AMS 2020 Mathematics Subject Classification:} 46E30.
\vskip 2mm

\noindent{\bf Key words and phrases:}  {Ces\`aro function spaces, associated spaces.}

\section{Introduction}

Let  $I:=(c,d)\subset\mathbb{R}$, $p\in [1,\infty]$,  $p':=\frac{p}{p-1}$, ${\cal L}^1$ be the Lebesgue measure on $I$, $\mathfrak{M}(I)$ be the vector space of all ${\cal L}^1$-measurable functions $f:I\to [-\infty,\infty]$, $N:=\{f\in \mathfrak{M}(I):f=0\ {\cal L}^1\text{-a.e. on}\ I\}$,  $\pi$ be the  quotient map of $\mathfrak{M}(I)$ onto $L^0(I):=\mathfrak{M}(I)/N$, $L^p(I)$ be the  Lebesgue space.   We write elements of the space $L^0(I)$ by bold letters: 
$\ef{f}$, $\ef{g}$, and we denote by $\pio{f}$ the element of $L^0(I)$, which contain  $f\in \mathfrak{M}(I)$.   Also we put 
$$L^p_{\loc}(I):=\Big\{\ef{f}\in L^0(I):\|\chi_{(a,b)}\ef{f}\|_{L^p(I)}<\infty,\,\forall a,b\in I\Big\},$$
$$L^p_{\loc}([c,d)):=\Big\{\ef{f}\in L^0(I):\|\chi_{(c,x)}\ef{f}\|_{L^p(I)}<\infty,\,\forall x\in I\Big\},$$
$$L^p_{\loc}((c,d]):=\Big\{\ef{f}\in L^0(I):\|\chi_{(x,d)}\ef{f}\|_{L^p(I)}<\infty,\,\forall x\in I\Big\}.$$
Let  
\begin{equation}\label{genw}
w\in \mathfrak{M}(I),\  w>0\ \leb\text{-a.e. on }I,\ \pio{w}\in L^p_{\loc}((c,d]) 
\end{equation}
and
$$\rho(f):=\begin{cases}\left(\int_I \left|w(x)\int_c^x f\right|^p\,dx\right)^\frac{1}{p}, & p\in [1,\infty),\\
                        \esup_{x\in I}\left|w(x)\int_c^x f\right|, & p=\infty;
                       \end{cases}$$
$$\Cs{p}:=\Big\{\ef{f}\in L^1_\loc([c,d))\,\Big|\,\|\ef{f}\|_{\Cs{p}}<\infty\Big\},\ \|\ef{f}\|_{\Cs{p}}:=\rho(|f|),$$
$$\Ch{p}:=\Big\{\ef{f}\in L^1_\loc([c,d))\,\Big|\,\|\ef{f}\|_{\Ch{p}}<\infty\Big\},\ \|\ef{f}\|_{\Ch{p}}:=\rho(f),\ f\in\ef{f}.$$
Clear  $\Cs{p}\subset \Ch{p}$. Since $w\in\eqref{genw}$  then $\ef{f}\in L^0(I)$ with compact support belongs to the space $\Cs{p}$.
The space $(\Cs{p},\|\cdot\|_{\Cs{p}})$ is named  weighted Ces\`aro space, it has been actively studied (see \cite{LM,KamKub} and the survey \cite{AM}). The space $(\Ch{p},\|\cdot\|_{\Ch{p}})$ we call weighted altered  Ces\`aro space.  It seems that there were no works about the spaces $\Ch{p}$ until recently  \cite{Pr2022,Pr2022-1,Step2022}.
 
The classic Ces\`aro space $\Csx{p}$ (where $I_0:=(0,\infty)$ and $w_0(x):=\frac{1}{x}$, $x\in I_0$) are studing since 1970s.  For $p\in (1,\infty)$ both spaces $\Csx{p}$ and $\Chx{p}$ are appeared during solving of problem of describing of accosiated spaces with one weighted Sobolev space on real line \cite{PSU0,PSU-UMN}, defined 
\begin{equation}\label{opredW}
W^1_{p}(I_0):=\{\ef{f}\in L^1_\loc(I_0): D\ef{f}\in L^1_\loc(I_0)\,\&\,\|\ef{f}\|_{W^1_{p}(I_0)}<\infty\},
\end{equation}
where $\|\ef{f}\|_{W^1_{p}(I_0)}:=\|\ef{f}\|_{L^{p}(I_0)}+\|{\textstyle \frac{1}{w_0}} D\ef{f}\|_{L^{p}(I_0)}$. As proved in \cite[Theorem 3.3]{Pr2022}
$$(W_p^1(I_0))'_\strong=\Csx{p'},\ \ \|\cdot\|_{(W_p^1(I_0))'_\strong}\approx\|\cdot\|_{\Csx{p'}};$$
$$(W_p^1(I_0))'_\weak=\Csx{p'},\ \ \|\cdot\|_{(W_p^1(I_0))'_\weak}\approx\|\cdot\|_{\Chx{p'}},$$
and the definition of associated spaces is given below. 

Let $(X,\|\cdot\|)$ be the normed space of elements of $L^0(I)$. The space associated (K\"othe dual space) with $X$ is
$$X'_\strong:=(X,\|\cdot\|)'_\strong:=\left\{\ef{g}\in L^0(I)\,\Big|\,\|\ef{g}\|_{X'_\strong}:=\sup_{\ef{h}\in X\setminus\{0\}}\frac{\|\ef{h}\ef{g}\|_{L^1(I)}}{\|\ef{h}\|}<\infty\right\}.$$
The ``strong'' space $X'_\strong$ does not always match the ``weak'' space 
\begin{align*}
&X'_\weak:=(X,\|\cdot\|)'_\weak:=\\
&=\left\{\ef{g}\in L^0(I)\,\Big|\,\ef{f}\ef{g}\in L^1(I)\,\forall\ef{f}\in X\ \&\ \|\ef{g}\|_{X'_\weak}:=\!\!\!\sup_{h\in\ef{h}\in X\setminus\{0\}}\!\!\frac{|\int_I hg|}{\|\ef{h}\|}<\infty,g\in\ef{g}\right\}, 
\end{align*}
which is  isomorphic to the subspace of the set  $X^*$ of all continuous functionals of the form $\ef{f}\mapsto \int_I fg$, $f\in \ef{f}\in X$, where $g\in\ef{g}$. Clear $X'_\strong\subset X'_\weak$.

From the definition of associated spaces $(\Cs{p})'_\strong=(\Cs{p})'_\weak$ and $\|\ef{g}\|_{(\Cs{p})'_\strong}=\|\ef{g}\|_{(\Cs{p})'_\weak}$ for $\ef{g}\in(\Cs{p})'_\strong$ and $p\in [1,\infty]$.
For $p\in [1,\infty)$ the space $\Cs{p}$ is  order ideal and it has absolutely continuous norm. Then   for  $\Lambda\in (\Cs{p})^*$ there exists $\ef{g}\in (\Cs{p})'_\strong$ such that $\|\Lambda\|_{(\Cs{p})^*}=\|\ef{g}\|_{(\Cs{p})'_\strong}$ and $\Lambda \ef{f}=\int_I fg$, $f\in\ef{f}\in\Cs{p}$, $g\in\ef{g}$ (see \cite[Chapter 1, Theorem 4.1]{BenSharp-1988}).

A problem of describing of associated   with $\Cs{p}$ spaces was solved in \cite{KamKub} with help of essential $\int_x^d w^p$-concave majorant  (see \cite[Definition 2.11]{KamKub}), and in  \cite{StepMZ2022} with help of  monotone majorant.

For $p\in [1,\infty)$ characterization of dual spaces for  weighted altered Ces\`aro space are given in the following theorem (see \cite{Pr2022-1}).

\begin{thm}\label{Ath}
Let  $p\in [1,\infty)$, $w\in \eqref{genw}$, $\ef{g}\in L^0(I)$. 

$1$. The equality $(\Ch{p})'_\strong=\{0\}$ holds.

$2$. The following statements are equivalent:
 
  $(i)$ $\ef{g}\in (\Cs{p},\|\cdot\|_{\Ch{p}})'_\weak;\vphantom{\Big[}$ 
  
  $(ii)$ $\ef{g}\cap AC_\loc(I)\not=\emptyset$, and for $\tilde g=\ef{g}\cap AC_\loc(I)$ there are $\tilde g(x)=-\int_x^d D\tilde g$, $x\in I$, and $\pio{\frac{D\tilde g}{w}}\in L^{p'}(I)$.
  
\  And $\|\ef{g}\|_{(\Cs{p},\|\cdot\|_{\Ch{p}})'_\weak}=\|\pio{\frac{D\tilde g}{w}}\|_{L^{p'}(I)}$.

$3$.   The following statements are equivalent:

  $(i)$ $\ef{g}\in (\Ch{p})'_\weak;\vphantom{\Big[}$ 
  
  $(ii)$ $\ef{g}\in L^\infty(I)$, $\ef{g}\cap AC_\loc(I)\not=\emptyset$, and for  $\tilde g=\ef{g}\cap AC_\loc(I)$ there are $\chi_{(b,d)}\tilde g=0$ for some  $b\in I$, and $\pio{\frac{D\tilde g}{w}}\in L^{p'}(I)$.
   
\  And $\|\ef{g}\|_{(\Ch{p})'_\weak}=\|\pio{\frac{D\tilde g}{w}}\|_{L^{p'}(I)}$.

$4$. The following statements are equivalent:
 
  $(i)$ $\Lambda\in (\Ch{p})^*;\vphantom{\Big[}$ 
  
  $(ii)$ there exists $h\in\mathfrak{M}(I)$ such that $\pio{\frac{h}{w}}\in L^{p'}(I)$,
\begin{equation*}%\label{predstfunk}
\Lambda \ef{f}=\int_I h(x)\left[\int_c^x f\right]\,dx,\ \ f\in\ef{f}\in\Ch{p}. 
\end{equation*}

\ And $\|\Lambda\|_{(\Ch{p})^*}=\|\pio{\frac{h}{w}}\|_{L^{p'}(I)}$.
\end{thm}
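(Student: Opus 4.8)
The plan is to realize $\Ch{p}$ through the operator $Sf(x):=\int_c^x f$, for which $\|\ef f\|_{\Ch p}=\rho(f)=\|\pio{w\cdot Sf}\|_{L^p(I)}$. Since $\pio{w}\in L^p_\loc((c,d])$, one checks that $\ef f\mapsto \ef{Sf}$ is an isometry of $\Ch{p}$ onto the subspace $V:=\{\ef u\in L^0(I): u\in AC_\loc([c,d)),\ u(c^+)=0,\ \pio{wu}\in L^p(I)\}$ of the weighted space $\{\ef u:\pio{wu}\in L^p(I)\}$, normed by $\|\pio{wu}\|_{L^p(I)}$; here surjectivity follows by taking $f=Du$ for $u\in V$. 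Because $p<\infty$, the dual of this weighted space is represented by functions $\eta$ with $\pio{\eta/w}\in L^{p'}(I)$ acting through $\ef u\mapsto\int_I u\eta$, of norm $\|\pio{\eta/w}\|_{L^{p'}(I)}$. This immediately yields Part 4: any $\Lambda\in(\Ch p)^*$ transports to a bounded functional on $V$, which I would extend to the whole weighted space by Hahn--Banach and represent by some $h$ with $\pio{h/w}\in L^{p'}(I)$ and $\|\pio{h/w}\|_{L^{p'}(I)}=\|\Lambda\|$, giving $\Lambda\ef f=\int_I h\,Sf=\int_I h(x)\int_c^x f$; no constraint beyond $\pio{h/w}\in L^{p'}(I)$ survives, since $h$ need not be the derivative of a pairing function.

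For the easy half of Parts 2 and 3, (ii)$\Rightarrow$(i), I would integrate by parts: for $g\in AC_\loc(I)$ with the prescribed vanishing at $d$, Fubini gives $\int_I fg=-\int_I (Sf)\,D\tilde g=-\int_I (w\,Sf)\tfrac{D\tilde g}{w}$, the endpoint contributions dying by $Sf(c^+)=0$ and by the decay of $\tilde g$ at $d$, and Hölder then yields $|\int_I fg|\le\|\pio{\frac{D\tilde g}{w}}\|_{L^{p'}(I)}\,\rho(f)$. In Part 3 one first notes that, with $\ef g\in L^\infty(I)$ and $\tilde g$ supported in $(c,b)$, the product $fg$ is supported on a compact subinterval where $f\in L^1$, so $\ef f\ef g\in L^1(I)$ for every $\ef f\in\Ch p$. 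This proves membership in the relevant weak associated space with norm at most $\|\pio{\frac{D\tilde g}{w}}\|_{L^{p'}(I)}$.

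For the \emph{hard half}, (i)$\Rightarrow$(ii), I would rerun the factorization: the pairing $\ef f\mapsto\int_I fg$ is bounded on the ($\Ch p$-normed) test space, hence transports to a bounded functional on a subspace of $V$, and after Hahn--Banach extension and representation one obtains $\eta$ with $\pio{\eta/w}\in L^{p'}(I)$ and $\int_I fg=\int_I(Sf)\eta$ for all admissible $f$. Testing against $f=\varphi'$ with $\varphi\in C_c^\infty(I)$ collapses $Sf$ to $\varphi$ and gives $\int_I\varphi'g=\int_I\varphi\,\eta$, i.e.\ the distributional derivative of $g$ equals $-\eta$; since $\pio{\eta/w}\in L^{p'}(I)$ and $\pio{w}\in L^p$ on compact subintervals, Hölder puts $\eta\in L^1_\loc(I)$, so $\ef g$ has a representative $\tilde g\in AC_\loc(I)$ with $D\tilde g=-\eta$ and $\|\pio{\frac{D\tilde g}{w}}\|_{L^{p'}(I)}=\|\pio{\eta/w}\|_{L^{p'}(I)}$ equal to the functional norm, matching the lower bound to the upper one.

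It then remains to read off the boundary behaviour, and this is where the two statements diverge and where the real work lies. In Part 2 the test functions come only from $\Cs p$; inserting a compactly supported $f$ with $\int_c^d f\neq0$ into $\int_I fg=-\int_I(Sf)D\tilde g$ and integrating by parts once more isolates the single endpoint term $(\int_c^d f)\,\tilde g(d^-)$, forcing $\tilde g(d^-)=0$, i.e.\ $\tilde g(x)=-\int_x^d D\tilde g$, with no boundedness or compact support required. In Part 3 the full space $\Ch p$ is available, and here the oscillating, cancelling functions do the damage: if $\ef g\notin L^\infty(I)$, or if $\tilde g$ fails to vanish on some $(b,d)$, one aligns a high-amplitude oscillation with the blow-up of $g$ (respectively places it near $d$) to produce $\ef f\in\Ch p$ of small $\rho$ yet with $\ef f\ef g\notin L^1(I)$, contradicting membership in $(\Ch p)'_\weak$; hence $\ef g\in L^\infty(I)$ and $\tilde g$ has compact support. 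Part 1 is the extreme case of the same mechanism: given any $\ef g\neq0$, I would build mean-zero, compactly supported oscillations $h_n$ on a set where $g\neq0$ with $\rho(h_n)\to0$ but $\int_I|h_n g|$ bounded below, so that $\|\ef g\|_{(\Ch p)'_\strong}=\infty$ and $(\Ch p)'_\strong=\{0\}$. The main obstacle throughout is precisely this separation of endpoint conditions --- proving that $\Cs p$-testing yields only $\tilde g(d^-)=0$, whereas $\Ch p$-testing additionally forces $L^\infty(I)$ membership and genuine compact support.
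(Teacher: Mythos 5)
The paper itself does not prove Theorem \ref{Ath}: it quotes it from \cite{Pr2022-1} and records only that the key step there is the approximation of elements of $\Ch{p}$ by compactly supported ones (the same device that reappears as Lemma \ref{intfgfinite} in the $p=\infty$ theory of Section \ref{main}). Your route is genuinely different and, for $p<\infty$, quite natural: you transport everything through the isometry $\ef{f}\mapsto\pio{Sf}$ onto a subspace of $L^p(w):=\{\ef{u}:\pio{wu}\in L^p(I)\}$ and then invoke $L^p$--$L^{p'}$ duality plus Hahn--Banach; that duality is exactly what is unavailable at $p=\infty$ and what forces this paper into the Riesz representation on $C_0(I)$ and the explicit compact-support approximation. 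Your necessity arguments (Part 1, and the $L^\infty$/vanishing-near-$d$ conditions in Part 3) via mean-zero high-frequency oscillations are the same mechanism as Lemma \ref{chasto} and its corollary, and they transplant to $p<\infty$ because the alternating construction gives $\rho(f)\le\frac{1}{n}\bigl(\int_a^b|h|\bigr)\|\chi_{(a,b)}\pio{w}\|_{L^p(I)}$. The factorization also explains cleanly why Part 4 carries no endpoint or boundedness constraints on $h$.

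Two places in the sketch need repair before it is a proof. First, in Part 3, (ii)$\Rightarrow$(i), the phrase ``Fubini gives $\int_I fg=-\int_I(Sf)D\tilde g$'' is not justified: for $\ef{f}\in\Ch{p}\setminus\Cs{p}$ the double integral $\int_I|D\tilde g(t)|\int_c^t|f|\,dt$ can be infinite (cancellation in $f$ near $c$ is precisely what keeps $\rho(f)$ small while $\int_c^t|f|$ stays large, and $D\tilde g$ need not be integrable near $c$ since $\pio{w}$ is only assumed in $L^p_{\loc}((c,d])$), so Tonelli's hypothesis fails. The correct substitute is to integrate by parts on $[a,b]$ and let $a\to c+$, using $\tilde g(b)=0$, $|\tilde g(a)|\le\|\ef{g}\|_{L^\infty(I)}$, $Sf(a)\to0$, and domination by $|\pio{\frac{D\tilde g}{w}}|\,|\pio{wSf}|\in L^1(I)$; this truncation-and-limit step is exactly the role the cited proof assigns to compact-support approximation. (In Part 2 Tonelli does hold, since $f\in\Cs{p}$ gives $\int_I|D\tilde g(t)|\int_c^t|f|\,dt\le\|\pio{\frac{D\tilde g}{w}}\|_{L^{p'}(I)}\rho(|f|)<\infty$.) Second, the exact norm equalities and the uniqueness of $h$ in Part 4 require the image subspaces $S(\Cs{p})$, $S(\Ch{p})$ to be dense (or at least norming) in $L^p(w)$; this is true but must be said: $C_c^1(I)\subset S(\Cs{p})$, and $\{\pio{wu}:u\in C_c^1(I)\}$ has trivial annihilator in $L^{p'}(I)$ because $w>0$ $\leb$-a.e. and $\pio{w}\in L^p_{\loc}(I)$. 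With these two repairs your plan goes through, including the endpoint dichotomy you single out as the crux.
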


The key step of the proof of Theorem \ref{Ath} was the approximation  of an element of the space  $\Ch{p}$   by elements with compact support. For $p=\infty$ there is no such approximation but it is possible (see Section \ref{main}) to   describe of the associated spaces with $\Ch{\infty}$ for  the weight 
\begin{equation}\label{weights}\begin{split}
w(x)=\left[\int_c^x v\right]^{-1}\in (0,\infty),\  x\in I,\  v\in \mathfrak{M}(I),\\ \pio{v}\in L^1_\loc([c,d)),\ \lim_{b\to d-}\int_c^b v=\infty.\end{split}
\end{equation}

Throughout this paper $A\lesssim B$ and $B\gtrsim A$ means that $A\le cB$, where the constant $c$ depends only on $p$  and may be different
in different places. If both $A\lesssim B$ and $A\gtrsim B$, then
we write $A\approx B$. $\mathbb{N}$ stands for the set of positive integers, $\mathbb{R}$ is the set of all real numbers, $Df$ is the derivative of function $f:I\to\Re$, $D\ef{f}$ is the weak derivative of $\ef{f}\in L^0(I)$.  $AC(I)$ is the space of all  absolutely continuous functions $f:I\to \mathbb{R}$, the space of all locally absolutely continuous functions  is denoted by $AC_\loc(I)$. The symbol $BPV(I)$ designates the space of all functions $f:I\to\Re$ having a bounded pointwise variation (see \cite[2.1]{Leoni}). For any Borel measure $\lambda$, defined on Borel subsets of $I$, 
the symbol $\|\lambda\|$ means $|\lambda|(I)$, where  
$|\lambda|$  is  the total variation of $\lambda$. If $f\in BPV(I)$ then $\lambda_f$  designates the unique real Borel measure such that $\lambda_f((a,b])=f(b+)-f(a+)$ for all $a,b\in I$, with $a\le b$ (see \cite[Theorem 5.13]{Leoni}).  
$C_c^1(I)$ is the space of all real continuously differentiable  functions  with the compact support in $I$;
$C_0(I)$ is the space of all real continuous functions on $I$ which vanish at infinity (see \cite[3.16]{Rudin}).

\section{Connection with Sobolev space}

Let $p\in(1,\infty)$, $I_0:=(0,\infty)$, $w_0(x):=\frac{1}{x}$, $x\in I_0$,
$$Y:=\{f\in AC(I_0)\,|\, \supp f\text{ is a compact in }I_0\}$$
and $W_p^1(I_0)$ is defined by \eqref{opredW}. 
By using \cite[Remark 5.1]{PSU-UMN}, 
$$\ef{g}\in (\pio{Y},\|\cdot\|_{W_p^1(I_0)})'_\weak\ \ \Leftrightarrow\ \  \ef{g}\in L^1_\loc(I_0)\ \&\  [\mathbb{G}(\ef{g})+\mathfrak{G}(\ef{g})]<\infty,$$
where for $g\in\ef{g}\in L^1_\loc(I_0)$ 
$$\mathbb{G}(\ef{g})\approx\left(\int_0^\infty \frac{1}{t^{p'}}\left|\int_\frac{t}{2}^t g\right|^{p'}\,dt\right)^\frac{1}{p'}$$
and
\begin{align*}
\mathfrak{G}(\ef{g})&\approx \left(\int_0^\infty \frac{1}{t^{p'(2-p')}}\left|\int_t^{2t}y^{-p'}\left[\int_\frac{y}{2}^t g\right] \,dy\right|^{p'}\,dt\right)^\frac{1}{p'}\\
&=\left(\int_0^\infty \frac{1}{t^{p'(2-p')}}\left|\int_\frac{t}{2}^t g(x)\left[\int_t^{2x} y^{-p'}\,dy\right] \,dx\right|^{p'}\,dt\right)^\frac{1}{p'}\\
&=\left(\int_0^\infty \frac{1}{t^{p'(2-p')}}\left|\int_\frac{t}{2}^t g(x)\left[\frac{t^{1-p'}-(2x)^{1-p'}}{p'-1}\right] \,dx\right|^{p'}\,dt\right)^\frac{1}{p'}\\
&=\frac{1}{p'-1}\left(\int_0^\infty \frac{1}{t^{p'}}\left|\int_\frac{t}{2}^t g(x)\left[1-\left(\frac{2x}{t}\right)^{1-p'}\right] \,dx\right|^{p'}\,dt\right)^\frac{1}{p'}\\
&=\frac{1}{p'-1}\left(\int_0^\infty \frac{1}{t^{p'}}\left|\int_\frac{t}{2}^t g-\left(\frac{2}{t}\right)^{1-p'}\int_\frac{t}{2}^t g(x)x^{1-p'}\,dx\right|^{p'}\,dt\right)^\frac{1}{p'}. 
\end{align*}
Hence, for $\ef{g}\in L^1_\loc(I_0)$  the inequality   $[\mathbb{G}(\ef{g})+\mathfrak{G}(\ef{g})]<\infty$ is equivalent to
$$\left(\int_0^\infty \frac{1}{t^{p'}}\left|\int_\frac{t}{2}^t g\right|^{p'}\,dt\right)^\frac{1}{p'}+\left(\int_0^\infty \frac{1}{t^{p'(2-p')}}\left|\int_\frac{t}{2}^t g(x)x^{1-p'}\,dx\right|^{p'}\,dt\right)^\frac{1}{p'}<\infty.$$

{\bf Example.} Let $p=p'=2$,  $g(x):=\frac{1}{x}\sin\frac{1}{x}\chi_{(0,1]}(x)$, $x\in I_0$, and $\ef{g}\ni g$. Then
$$\int_0^1|g(x)|\,dx=\int_0^1\frac{|\sin\frac{1}{x}|}{x}\,dx=\int_1^\infty\frac{|\sin y|}{y}\,dy=\infty,$$
$$\int_1^\infty \left|\int_\frac{t}{2}^t \frac{g(x)}{x}\,dx\right|^2\,dt=\int_1^2 \left|\int_\frac{t}{2}^1 \frac{\sin \frac{1}{x}}{x^2}\,dx\right|^2\,dt\le 4,$$
\begin{align*}
\int_0^1 \left|\int_\frac{t}{2}^t \frac{g(x)}{x}\,dx\right|^2\,dt&=\int_0^1 \left|\int_\frac{t}{2}^t \frac{\sin \frac{1}{x}}{x^2}\,dx\right|^2\,dt=\int_0^1 \left|\int_\frac{1}{t}^\frac{2}{t} \sin y\,dy\right|^2\,dt\\
&=\int_1^\infty \frac{1}{x^2}\left|\int_x^{2x}\sin y\,dy\right|^2\,dx<\infty, 
\end{align*}
$$\int_1^\infty \frac{1}{t^2}\left|\int_\frac{t}{2}^t g\right|^2\,dt=\int_1^2 \frac{1}{t^2}\left|\int_\frac{t}{2}^1 \frac{\sin \frac{1}{x}}{x}\,dx\right|^2\,dt\le 1.$$
Besides that, from
$$\left|\int_y^{2y}\frac{\sin t}{t}\,dt\right|=\left|\int_y^{2y}\frac{d\cos t}{t}\right|=\left|\frac{\cos t}{t}\Big|_y^{2y}+\int_y^{2y}\frac{\cos t}{t^2}\,dt\right|\le \frac{5}{2y},$$
we have the estimate
\begin{align*}
\int_0^1 \frac{1}{t^2}\left|\int_\frac{t}{2}^t g\right|^2\,dt&=\int_0^1\frac{1}{t^2}\left|\int_\frac{t}{2}^t\frac{\sin\frac{1}{x}}{x}\,dx\right|^2\,dt=\int_0^1\frac{1}{t^2}\left|\int_\frac{1}{t}^\frac{2}{t}\frac{\sin y}{y}\,dy\right|^2\,dt\\
&=\int_1^\infty\left|\int_x^{2x}\frac{\sin y}{y}\,dy\right|^2\,dx\le\frac{25}{4}\int_1^\infty\frac{dy}{y^2}<\infty.
\end{align*}
Therefore, $\ef{g}\in L^1_\loc(I_0)\setminus L^1_\loc([0,\infty))$ and  $[\mathbb{G}(\ef{g})+\mathfrak{G}(\ef{g})]<\infty$, that is 
$$\ef{g}\in (\pio{Y},\|\cdot\|_{W_2^1(I_0)})'_\weak\setminus \Chx{2}.$$

\begin{thm} Let 
$p\in(1,\infty)$, $I_0:=(0,\infty)$, $w_0(x):=\frac{1}{x}$, $x\in I_0$,
$$X:=\{f\in AC(I_0)\,|\, \exists f(0+),\ \exists b\in I_0:\,\chi_{(b,\infty)}f=0\}$$
and $W_p^1(I_0)$ is defined by \eqref{opredW}.  Then 
$$(\pio{X},\|\cdot\|_{W_p^1(I_0)})'_\weak=\Chx{p'},\ \ \|\cdot\|_{(\pio{X},\|\cdot\|_{W_p^1(I_0)})'_\weak}\approx\|\cdot\|_{\Chx{p'}}.$$
\end{thm}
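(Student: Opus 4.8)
The plan is to reduce the associated-space norm to a plain $L^{p'}$-duality by integrating by parts and then stripping the $L^p$-part of the Sobolev norm with a Hardy inequality. Fix $g\in\ef{g}\in L^0(I_0)$ and recall $\frac{1}{w_0(x)}=x$. First I would note that any $\ef{g}$ lying in the weak associated space automatically satisfies $\ef{g}\in L^1_\loc([0,\infty))$: testing against $f_a\in X$ equal to $1$ on $(0,a]$, affine on $[a,2a]$ and $0$ on $[2a,\infty)$ gives $\int_0^a|g|\le\int_{I_0}|f_a g|<\infty$. Hence for every admissible $\ef{g}$, and trivially for $\ef{g}\in\Chx{p'}$, the primitive $G(x):=\int_0^x g$ is a well-defined, locally absolutely continuous function on $[0,\infty)$ with $G(0+)=0$, and setting $\phi:=w_0 G$ one has $\|\ef{g}\|_{\Chx{p'}}=\|\phi\|_{L^{p'}(I_0)}$. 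The computational core is the identity
$$\int_0^\infty fg=-\int_0^\infty f'G=-\int_0^\infty (xf')\,\phi,\qquad f\in X,$$
obtained by integrating by parts on $(\varepsilon,M)$ and letting $\varepsilon\to0+$, $M\to\infty$; the boundary term at infinity vanishes since $\chi_{(b,\infty)}f=0$, and the one at $0$ vanishes since $f(0+)$ exists while $G(0+)=0$.

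Next I would establish the norm equivalence $\|f\|_{W^1_p(I_0)}\approx\|xf'\|_{L^p(I_0)}$ on $X$. One direction is immediate, as $\|xf'\|_{L^p}\le\|f\|_{W^1_p}$. For the other, since $f\in X$ vanishes near infinity we may write $f(x)=-\int_x^\infty f'(t)\,dt$, so that $|f(x)|\le\int_x^\infty\frac{|tf'(t)|}{t}\,dt=(Hh)(x)$ with $h(t):=|tf'(t)|$, where $Hk(x):=\int_x^\infty\frac{k(t)}{t}\,dt$ is the conjugate Hardy operator. Since $H$ is the adjoint of the classical Hardy operator on $L^{p'}(I_0)$ it is bounded on $L^p(I_0)$ with norm at most $p$, which yields $\|f\|_{L^p}\le p\,\|xf'\|_{L^p}$ and therefore $\|f\|_{W^1_p}\le(p+1)\|xf'\|_{L^p}$. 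I expect this reduction to be the main obstacle: it is where the two defining features of $X$ are consumed (vanishing at infinity for the Hardy estimate, existence of $f(0+)$ for the boundary term at $0$), and it also requires some care to justify the integration by parts for every $f\in X$ of finite Sobolev norm, since $f'$ need not be integrable near $0$.

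With these two facts in hand both inclusions are short. For $\Chx{p'}\subseteq(\pio{X},\|\cdot\|_{W^1_p(I_0)})'_\weak$: given $\ef{g}\in\Chx{p'}$, the boundedness of $f$ on $(0,b)$ together with $g\in L^1_\loc([0,\infty))$ gives $fg\in L^1$, and Hölder applied to the identity gives $|\int fg|\le\|xf'\|_{L^p}\|\phi\|_{L^{p'}}\le\|f\|_{W^1_p}\|\ef{g}\|_{\Chx{p'}}$, so the associated norm is $\le\|\ef{g}\|_{\Chx{p'}}$. For the reverse inclusion I would prescribe derivatives: for $\psi\in L^p(I_0)$ supported in a compact $[a,b]\subset I_0$, the function $f(x):=-\int_x^\infty\frac{\psi(t)}{t}\,dt$ belongs to $X$, has finite Sobolev norm and satisfies $xf'=\psi$, so the identity reads $\int fg=-\int\psi\,\phi$. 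Choosing $\psi_n:=|\phi|^{p'-1}\mathrm{sgn}(\phi)\chi_{[1/n,n]}$, which is admissible because $\phi$ is continuous hence locally bounded, gives $\frac{|\int\psi_n\phi|}{\|\psi_n\|_{L^p}}=\big(\int_{1/n}^n|\phi|^{p'}\big)^{1/p'}\to\|\phi\|_{L^{p'}}$ while $\|f_n\|_{W^1_p}\le(p+1)\|\psi_n\|_{L^p}$, whence $\|\ef{g}\|_{\Chx{p'}}=\|\phi\|_{L^{p'}}\le(p+1)\,\|\ef{g}\|_{(\pio{X},\|\cdot\|_{W^1_p(I_0)})'_\weak}$. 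Together these give the set equality and the equivalence $\|\cdot\|_{(\pio{X},\|\cdot\|_{W^1_p(I_0)})'_\weak}\approx\|\cdot\|_{\Chx{p'}}$. Finally, this explains the Example: for the smaller space $Y$ one loses the boundary value at $0$, hence the constraint $w_0 G\in L^{p'}$, so its weak associated space is strictly larger; it is precisely the boundary condition at $0$ built into $X$ that recovers $\Chx{p'}$ exactly.
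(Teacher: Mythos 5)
Your proof is correct, but it follows a genuinely different route from the paper's. The paper does not argue directly: it imports the description of $(\pio{Y},\|\cdot\|_{W_p^1(I_0)})'_\weak$ (compactly supported $Y$) from \cite[Remark 5.1]{PSU-UMN} in terms of the functionals $\mathbb{G}+\mathfrak{G}$, shows via a dyadic summation that for $\ef{g}\in L^1_\loc([0,\infty))$ one has $\mathbb{G}(\ef{g})+\mathfrak{G}(\ef{g})\lesssim\|\ef{g}\|_{\Chx{p'}}\approx\mathbb{G}(\ef{g})$, upgrades $L^1_\loc(I_0)$ to $L^1_\loc([0,\infty))$ with a single cutoff test function, and then transfers the resulting estimate from $Y$ to $X$ by approximating $f\in X$ in the $W^1_p$-norm with two-parameter cutoffs $\omega_{\alpha_1,\alpha_2,\beta_1,\beta_2}f\in Y$ and dominated convergence. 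You instead give a self-contained argument: the integration by parts $\int fg=-\int (xf')\phi$ with $\phi=w_0\int_0^\cdot g$ (the boundary terms dying precisely because $f(0+)$ exists, $G(0+)=0$ and $f$ vanishes near infinity), the Hardy-inequality reduction $\|f\|_{W^1_p(I_0)}\le(p+1)\|xf'\|_{L^p(I_0)}$ on $X$, and explicit near-extremizers $\psi_n=|\phi|^{p'-1}\operatorname{sgn}(\phi)\chi_{[1/n,n]}$ for the $L^p$--$L^{p'}$ duality. Your route yields explicit constants ($\|\cdot\|_{(\pio{X},\|\cdot\|_{W_p^1(I_0)})'_\weak}\le\|\cdot\|_{\Chx{p'}}\le(p+1)\|\cdot\|_{(\pio{X},\|\cdot\|_{W_p^1(I_0)})'_\weak}$) and avoids the external machinery of \cite{PSU-UMN}; the paper's route, by working relative to $Y$, makes transparent exactly which extra conditions distinguish $(\pio{X})'_\weak$ from $(\pio{Y})'_\weak$ and thus ties in directly with the Example. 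All the delicate points you flag (integrability of $f'G$ near $0$ via H\"older with $xf'\in L^p$, $\phi\in L^{p'}$; local boundedness of $\phi$ so that $\psi_n\in L^p$; membership $\ef{f}\ef{g}\in L^1(I_0)$ before invoking the pairing) are handled correctly, so I see no gap.
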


\begin{proof}
 For  $g\in \ef{g}\in L^1_\loc([0,\infty))$ we have (see the proof of \cite[Theorem 3.3]{Pr2022}) 
\begin{align}
&[\mathbb{G}(\ef{g})+\mathfrak{G}(\ef{g})]\lesssim \left(\int_0^\infty \frac{1}{t^{p'}}\left|\int_0^t g\right|^{p'}\,dt\right)^\frac{1}{p'}=\left(\int_0^\infty \frac{1}{t^{p'}}\left|\sum_{j\ge 0}\int_{t 2^{-(j+1)}}^{t2^{-j}} g\right|^{p'}\,dt\right)^\frac{1}{p'}\label{forloc}\\
&\le \sum_{j\ge 0}\left(\int_0^\infty \frac{1}{t^{p'}}\left|\int_{t 2^{-(j+1)}}^{t2^{-j}} g\right|^{p'}\,dt\right)^\frac{1}{p'}=\sum_{j\ge 0}(2^j)^{-\frac{1}{p}}\left(\int_0^\infty \frac{1}{x^{p'}}\left|\int_\frac{x}{2}^x g\right|^{p'}\,dx\right)^\frac{1}{p'}\nonumber\\
&=\frac{2^\frac{1}{p}}{2^\frac{1}{p}-1}\left(\int_0^\infty \frac{1}{x^{p'}}\left|\int_\frac{x}{2}^x g\right|^{p'}\,dx\right)^\frac{1}{p'}\approx \mathbb{G}(\ef{g}).\nonumber
\end{align}

Let $\ef{g}\in (\pio{X},\|\cdot\|_{W_p^1(I_0)})'_\weak$ and $g\in\ef{g}$. Since $Y\subset X$ then   $\ef{g}\in L^1_\loc(I_0)$ and
$$\left(\int_0^\infty \frac{1}{t^{p'}}\left|\int_\frac{t}{2}^t g\right|^{p'}\,dt\right)^\frac{1}{p'}<\infty.$$
For any  $b\in I_0$ a function $f\in C^\infty(I)$ with properties: $f=1$ on $(0,b)$ and  $f=0$ on $(b+1,\infty)$ belongs to the space $X$. It implies $\ef{g}\in L^1_\loc([0,\infty))$ and by \eqref{forloc}
$\ef{g}\in \Chx{p'}$ and $\|\ef{g}\|_{(\pio{X},\|\cdot\|_{W_p^1(I_0)})'_\weak}\approx\|\ef{g}\|_{\Chx{p'}}$.

Now let  $\ef{g}\in \Chx{p'}$. Then $\ef{g}\in L^1_\loc([0,\infty))$ and by \eqref{forloc} for any  $f\in Y$ the inequality
\begin{equation} \label{intocenka}
\left|\int_{I_0} fg\right|\lesssim \|\ef{g}\|_{\Chx{p'}}\|\pio{f}\|_{W_p^1(I_0)}.
\end{equation}
holds. Fix an arbitrary $f\in X$. Let $b\in I_0$ such that $f\chi_{(b,\infty)}=0$. For $0<\alpha_1<\alpha_2<\beta_2<\beta_1<\infty$ we put
\begin{equation*}%\label{opredomega}
  \omega_{\alpha_1,\alpha_2,\beta_1,\beta_2}(x):=
  \begin{cases}
  \left(\int_{\alpha_1}^{\alpha_2}\frac{dt}{t^{p'}}\right)^{-1}\int_{\alpha_1}^x\frac{dt}{t^{p'}},& x\in (\alpha_1,\alpha_2),\\
  1,& x\in [\alpha_2,\beta_2],\\
  \left(\int_{\beta_2}^{\beta_1}\frac{dt}{t^{p'}}\right)^{-1}\int_x^{\beta_1}\frac{dt}{t^{p'}},& x\in (\beta_2,\beta_1),\\
  0,& x\in I_0\setminus(\alpha_1,\beta_1).
  \end{cases}
 \end{equation*}
 Then  $\omega_{\alpha_1,\alpha_2,\beta_1,\beta_2} f\in Y$ and $\lim_{\genfrac{}{}{0pt}{2}{\alpha_2\to 0+}{\beta_2\to\infty}}\|\pio{f-\omega_{\alpha_1,\alpha_2,\beta_1,\beta_2} f}\|_{W_p^1(I_0)}=0$.
Also,
$$\int_{I_0}|\omega_{\alpha_1,\alpha_2,\beta_1,\beta_2} fg|\le \max_{x\in (0,b)}|f(x)|\int_0^b|g|<\infty,$$
and, by passing to the limit as $\alpha_2\to 0+$, $\beta_2\to\infty$, by Lebesgue's dominant convergence theorem we get  the estimate  \eqref{intocenka} for $f\in X$. 
\end{proof}

 Now we  show that in case decreasing weight $w\in \eqref{weights}$ the spaces $\Cs{\infty}$ and $\Ch{\infty}$  are associated with some  weighted Sobolev space of the first order on $I$.
 
 \begin{thm}
Let $w\in \eqref{weights}$, $v>0$ $\leb$-a.e. on $I$, 
$$X:=\{f\in AC(I),\,|\, \exists f(c+),\ \exists b\in I:\,\chi_{(b,d)}f=0\}$$
and
 \begin{equation*}
W^1_1(I):=\{\ef{f}\in L^1_\loc(I): D\ef{f}\in L^1_\loc(I)\,\&\,\|\ef{f}\|_{W^1_1(I)}<\infty\},
\end{equation*}
where $\|\ef{f}\|_{W^1_1(I)}:=\|v\ef{f}\|_{L^1(I)}+\|{\textstyle \frac{1}{w}} D\ef{f}\|_{L^1(I)}$.
Then 
\begin{align}
(W_1^1(I))'_\strong&=\Cs{\infty},\ \ \|\cdot\|_{(W_1^1(I))'_\strong}\approx\|\cdot\|_{\Cs{\infty}};\label{Wstr}\\
(W_1^1(I))'_\weak&=\Cs{\infty},\ \ \|\cdot\|_{(W_1^1(I))'_\weak}\approx\|\cdot\|_{\Ch{\infty}};\label{Wslab}\\
(\pio{X},\|\cdot\|_{W_1^1(I)})'_\weak&=\Ch{\infty},\ \ \|\cdot\|_{(\pio{X},\|\cdot\|_{W_1^1(I)})'_\weak}\approx\|\cdot\|_{\Ch{\infty}}.\label{WX}
\end{align}
 \end{thm}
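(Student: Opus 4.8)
The plan is to derive all three identities from one integration-by-parts computation, organised around the single increasing function $V(x):=\int_c^x v=\frac{1}{w(x)}$, which lies in $AC_\loc(I)$ and, by \eqref{weights}, satisfies $V(c+)=0$ and $V(d-)=\infty$. (Equivalently, the substitution $s=V(x)$ turns $W_1^1(I)$, $\Cs{\infty}$, $\Ch{\infty}$ and $X$ into the model objects on $I_0=(0,\infty)$ with $w_0=1/s$, so the statement is the $p=1$ endpoint of the model case; I prefer to argue directly on $I$.) For $\ef g\in L^1_\loc([c,d))$ set $\Gamma(x):=\int_c^x g$ and $G(x):=\int_c^x|g|$; both belong to $AC_\loc(I)$ and vanish at $c+$. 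For such $\ef g$, finiteness of $\|\ef g\|_{\Ch{\infty}}$ is exactly the bound $|\Gamma(x)|\le\|\ef g\|_{\Ch{\infty}}V(x)$ for all $x$, and finiteness of $\|\ef g\|_{\Cs{\infty}}$ is $G(x)\le\|\ef g\|_{\Cs{\infty}}V(x)$. On $[x_1,x_2]\subset I$ and $f\in AC_\loc(I)$ one has $\int_{x_1}^{x_2}fg=f(x_2)\Gamma(x_2)-f(x_1)\Gamma(x_1)-\int_{x_1}^{x_2}\Gamma\,Df$, and the same with $|f|,|g|,G$. Letting $x_1\to c+$, $x_2\to d-$ and using $\int_I V|Df|=\|\tfrac1w D\ef f\|_{L^1(I)}$, the whole matter comes down to controlling the two boundary products and to the estimate $\int_I|\Gamma|\,|Df|\le\|\ef g\|_{\Ch{\infty}}\|\tfrac1w D\ef f\|_{L^1(I)}$.

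The boundary terms are the crux, since a general $\ef f\in W_1^1(I)$ need neither have a trace at $c$ nor vanish near $d$. First I would prove the auxiliary fact $\liminf_{x\to c+}|f(x)|V(x)=0$ and $\liminf_{x\to d-}|f(x)|V(x)=0$: if $|f|V\ge\delta>0$ on a one-sided neighbourhood of $c$, then there $v|f|\ge\delta\,v/V=\delta(\log V)'$, so $\int v|f|=\infty$ because $\log V(c+)=-\infty$; the argument at $d$ uses $\log V(d-)=+\infty$. Since $|\Gamma|\le\|\ef g\|_{\Ch{\infty}}V$ and $G\le\|\ef g\|_{\Cs{\infty}}V$, the products $f\Gamma$ and $|f|G$ vanish along suitable sequences $x_1\to c+$, $x_2\to d-$, which justifies the limiting integration by parts on all of $I$ and yields $\int_I|fg|\le\|\ef g\|_{\Cs{\infty}}\|\tfrac1w D\ef f\|_{L^1(I)}$ and $\big|\int_I fg\big|\le\|\ef g\|_{\Ch{\infty}}\|\tfrac1w D\ef f\|_{L^1(I)}$ for every $\ef f\in W_1^1(I)$. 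These are the upper bounds needed in \eqref{Wstr} and in the norm halves of \eqref{Wslab} and \eqref{WX}.

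For the reverse inequalities I would test against $f_\epsilon\in X\subset W_1^1(I)$ equal to $1$ on $(c,x_0)$, dropping linearly to $0$ on $(x_0,x_0+\epsilon)$, and vanishing on $(x_0+\epsilon,d)$. A direct computation gives $\|\pio{f_\epsilon}\|_{W_1^1(I)}\to 2V(x_0)$ as $\epsilon\to0+$, while $\int_I f_\epsilon g\to\Gamma(x_0)$ and $\int_I f_\epsilon|g|\to G(x_0)$ (the tails vanish since $\ef g\in L^1_\loc([c,d))$). Taking $f_\epsilon$ as competitor therefore gives $G(x_0)\le 2V(x_0)\,\|\ef g\|_{(W_1^1(I))'_\strong}$ and $|\Gamma(x_0)|\le 2V(x_0)\,\|\ef g\|_{(W_1^1(I))'_\weak}$ for every $x_0$; since $\Gamma,G$ are continuous, the supremum over $x_0$ recovers $\|\ef g\|_{\Cs{\infty}}\lesssim\|\ef g\|_{(W_1^1(I))'_\strong}$ and $\|\ef g\|_{\Ch{\infty}}\lesssim\|\ef g\|_{(W_1^1(I))'_\weak}$ (and likewise for $(\pio{X},\|\cdot\|_{W_1^1(I)})'_\weak$). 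The same $f_\epsilon$, being identically $1$ on an initial interval, shows that every element of any of the three associate spaces lies in $L^1_\loc([c,d))$, so that $\Gamma$ and $G$ are defined from the start.

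Assembling: \eqref{Wstr} is immediate from the two bounds, giving $(W_1^1(I))'_\strong=\Cs{\infty}$ with equivalent norms. For \eqref{WX}, a function $f\in X$ has a finite trace $f(c+)$ and vanishes near $d$, so the boundary products are zero with no extra work, the domain condition $\ef f\ef g\in L^1(I)$ holds for every $\ef g\in\Ch{\infty}$ (as $f$ is bounded with support bounded away from $d$ and $\ef g\in L^1_\loc([c,d))$), and the bounds give $(\pio{X},\|\cdot\|_{W_1^1(I)})'_\weak=\Ch{\infty}$ with the stated norm. The genuinely delicate point is the set identity $(W_1^1(I))'_\weak=\Cs{\infty}$ in \eqref{Wslab}: I would combine the general inclusion $(W_1^1(I))'_\strong\subset(W_1^1(I))'_\weak$ with the reverse one, proved by noting that the everywhere-finite seminorm $\ef f\mapsto\int_I|fg|$ is lower semicontinuous on the Banach space $W_1^1(I)$ — Fatou along an a.e.-convergent subsequence extracted from $L^1(v\,\leb)$-convergence — hence bounded on the unit ball by Baire's theorem, so that the weak domain condition alone forces $\ef g\in(W_1^1(I))'_\strong=\Cs{\infty}$, after which the weak norm is computed as above to be $\approx\|\cdot\|_{\Ch{\infty}}$. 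I expect this semicontinuity/Baire step, together with the careful vanishing of the boundary products for unbounded $\ef f\in W_1^1(I)$, to be the main obstacles; the remaining estimates are routine.
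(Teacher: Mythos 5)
Your proposal is correct, and it reaches the theorem by a noticeably different route than the paper. Where you kill the boundary terms in the integration by parts via the symmetric lemma $\liminf_{x\to c+}|f(x)|V(x)=\liminf_{x\to d-}|f(x)|V(x)=0$ (from the logarithmic divergence of $\int v/V$ at both endpoints), the paper instead proves the stronger statement that every $\ef{f}\in W_1^1(I)$ satisfies $f(d-)=0$, writes $f(x)=-\int_x^d Df$, and gets the upper bounds by Tonelli/Fubini rather than by a limiting integration by parts; your weaker liminf statement is entirely sufficient for the estimates and handles the endpoint $c$ on the same footing. For the lower bounds the paper tests against the whole family $f_h(y)=\int_y^d wh$, $h\in L^1(I)$, and invokes the duality $(L^1)'=L^\infty$, whereas your truncated indicators $f_\epsilon$ are in effect the special case $h=\tfrac{1}{\epsilon w}\chi_{(x_0,x_0+\epsilon)}$; continuity of $\Gamma$, $G$ and $V$ makes this one-parameter subfamily enough, and since $f_\epsilon\in X$ it serves all three identities at once, including \eqref{WX}, where the paper has to restrict the supports of $h$ separately. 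The one step where you replace a citation by an argument is the set identity in \eqref{Wslab}: the paper quotes \cite[Theorem 2.5]{PSU0} for $(W_1^1(I))'_\weak=(W_1^1(I))'_\strong$, while you reprove it via lower semicontinuity of $\ef{f}\mapsto\int_I|fg|$ (Fatou along an a.e.\ convergent subsequence, using $v>0$ a.e.) plus Baire category. That argument is sound, but it silently uses completeness of $(W_1^1(I),\|\cdot\|_{W_1^1(I)})$, which you assert without proof; it does hold here (on compact subsets of $I$ the weight $1/w=V$ is bounded above and below by positive constants, so a Cauchy sequence has locally $L^1$-convergent derivatives, and $v>0$ a.e.\ identifies the limit), and you should record that check if you want the proof to be self-contained. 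With that small addition, your version is a complete and somewhat more elementary proof of the theorem.
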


 \begin{proof}
Fix any  $\ef{f}\in W^1_1(I)$. There exists $f=\ef{f}\cap AC_\loc(I)$ and for any $x,y\in I$ such that $x>y$ we have
$$|f(x)-f(y)|\le \int_y^x|Df|\le \|\chi_{(y,d)}\pio{w}\|_{L^{\infty}(I)}\|\chi_{(y,d)}{\textstyle \frac{1}{w}} D\ef{f}\|_{L^1(I)}.$$ 
 Hence, there exists the limit $f(d-)$. Since $\|v\ef{f}\|_{L^1(I)}<\infty$ and $v\not\in L^1_\loc((c,d])$ then $f(d-)=0$. Also $\pio{w}\in L^{\infty}_\loc((c,d])$ implies $D\ef{f}\in L^{1}_\loc((c,d])$. Consequently, $f(x)=-\int_x^d Df$, $x\in I$.
 
Also for an arbitrary $h\in\ef{h}\in L^1(I)$ since $\pio{w}\in L^{\infty}_\loc((c,d])$ then  $w\ef{h}\in L^1_\loc((c,d])$ and for $f_h(y):=\int_y^d wh$, $y\in I$, $\ef{f_h}\ni f_h$  we have  
$$\|v\ef{f_h}\|_{L^1(I)}=\int_I \left|v(y)\int_y^d wh\right|\,dy\le\|\ef{h}\|_{L^1(I)},\ \  \
 \|{\textstyle \frac{1}{w}} D\ef{f_h}\|_{L^1(I)}=\|\ef{h}\|_{L^1(I)},$$ 
 that is $\ef{f_h}\in W_1^1(I)$ and $\|\ef{f_h}\|_{W_1^1(I)}\le 2\|\ef{h}\|_{L^1(I)}$.
 
If $g\in\ef{g}\in \Cs{\infty}$ then for any $\ef{f}\in W^1_1(I)\setminus\{0\}$
\begin{equation}\label{upperest}
\frac{\int_I |fg|}{\|\ef{f}\|_{W_1^1(I)}}\le \frac{\int_I \left|(Df)(x)\int_c^x|g|\right|\,dx}{\|{\textstyle \frac{1}{w}} D\ef{f}\|_{L^1(I)}}\le \|\ef{g}\|_{\Cs{\infty}}, 
\end{equation}
that is $\ef{g}\in (W_1^1(I))'_\strong$ and  $\|\ef{g}\|_{(W_1^1(I))'_\strong}\le \|\ef{g}\|_{\Cs{\infty}}$.

Now let $\ef{g}\in (W_1^1(I))'_\strong$.  Since (see \cite[Lemma 2.8]{BenSharp-1988}) for  $\ef{g}\in L^0(I)$ there are the equalities $\|\ef{g}\|_{(L^1(I))'_\strong}=\|\ef{g}\|_{(L^1(I))'_\weak}=\|\ef{g}\|_{L^\infty(I)}$, we obtain
\begin{align}
\|\ef{g}\|_{(W_1^1(I))'_\strong}&\ge \sup_{f\in\ef{f_{|h|}}\in W_1^1(I)\setminus\{0\}}\frac{\int_I |fg|}{\|\ef{f_{|h|}}\|_{W_1^1(I)}}\ge \sup_{\ef{h}\in L^1(I)\setminus\{0\}}\frac{\int_I |g(y)|\left[\int_y^d w|h|\right]\,dy}{2\|\ef{h}\|_{L^1(I)}}\nonumber\\
&=\sup_{\ef{h}\in L^1(I)\setminus\{0\}}\frac{\int_I |h(x)|w(x)\left[\int_c^x |g|\right]\,dx}{2\|\ef{h}\|_{L^1(I)}} =\frac{1}{2}\|\ef{g}\|_{\Cs{\infty}},\label{lowest}
\end{align}
and \eqref{Wstr} is proved.

By \cite[Theorem 2.5]{PSU0} the equality $(W_1^1(I))'_\weak=(W_1^1(I))'_\strong=\Cs{\infty}$ holds. Besides that, for any $g\in\ef{g}\in \Cs{\infty}$, $\ef{f}\in W_1^1(I)$, $f=\ef{f}\cap AC_\loc(I)$, we have
\begin{equation}\label{perestint}
\int_I fg=\int_I g(x)\left(\int_x^d Df\right)\,dx=\int_I (Df)(y)\left(\int_c^y g\right)\,dy. 
\end{equation}
Hence, similarly to \eqref{upperest} and \eqref{lowest}  we get $\|\ef{g}\|_{(W_1^1(I))'_\weak}\approx \|\ef{g}\|_{\Ch{\infty}}$, and \eqref{Wslab} is proved.

Further, for any $a\in I$ there exists the function $f\in X$ such that $\chi_{(c,a)}f=\chi_{(c,a)}$, and it implies $(\pio{X},\|\cdot\|_{W_1^1(I)})'_\weak\subset L^1_\loc([c,d))$. 
Consequently, for any  $a\in I$, $f\in X$, $g\in\ef{g}\in L^1_\loc([c,d))$ in view of decreasing $w$ we have
\begin{align*}
&\left|\int_a^d fg\right|=\left|\int_a^d g(x)\left(\int_x^d Df\right)\,dx\right|=\left|\int_a^d (Df)(y)\left(\int_a^y g\right)\,dy\right|\\
&\le \|\pio{f}\|_{W_1^1(I)}\sup_{y\in [a,d)}\left|w(y)\left[\int_c^y g-\int_c^a g\right]\right|\le 2\|\ef{g}\|_{\Ch{\infty}}\|\pio{f}\|_{W_1^1(I)}. 
\end{align*}
By passing to the limit as $a\to c+$, we get 
$\|\ef{g}\|_{(\pio{X},\|\cdot\|_{W_1^1(I)})'_\weak}\le 2\|\ef{g}\|_{\Ch{\infty}}$.

If $g\in\ef{g}\in (\pio{X},\|\cdot\|_{W_1^1(I)})'_\weak$ and $h\in\ef{h}\in L^1(I)$ with  $\supp\ef{h}\subset (c,b]$ for some $b\in I$, the equalities \eqref{perestint} hold with $f:=f_h$.
Therefore,
\begin{align*}
\|\ef{g}\|_{(\pio{X},\|\cdot\|_{W_1^1(I)})'_\weak}&\ge \sup_{b\in I}\sup_{\ef{h}\in L^1(I)\setminus\{0\},\supp\ef{h}\subset (c,b]}\frac{\left|\int_c^b g(y)\left[\int_y^b w h\right]\,dy\right|}{2\|\ef{h}\|_{L^1(I)}}\\
&\ge \sup_{b\in I}\sup_{\ef{h}\in L^1(I)\setminus\{0\},\supp\ef{h}\subset (c,b]}\frac{\left|\int_c^b h(x)w(x)\left[\int_c^x g\right]\,dx\right|}{2\|\ef{h}\|_{L^1(I)}}\\
&=\frac{1}{2}\sup_{b\in I}\sup_{x\in(c,b]}\left|w(x)\int_c^x g\right|=\frac{1}{2}\|\ef{g}\|_{\Ch{\infty}}, 
\end{align*}
and \eqref{WX} follows. 
\end{proof}

\section{Associated spaces with $\Ch{\infty}$}\label{main}

As in for the case $p<\infty$ the ``strong'' space associated with $\Ch{\infty}$ is the null space.
 
\begin{lem} \label{chasto} Let $w\in\eqref{genw}$, $[a,b]\subset I$ and $h\in\ef{h}\in L^1([a,b])$. For any $\varepsilon >0$ there exists $f\in\mathfrak{M}(I)$ such that $\supp f\subset [a,b]$, $|f|=|h|$ on $(a,b)$ and $\|\pio{f}\|_{\Ch{\infty}}<\varepsilon$. 
 \end{lem}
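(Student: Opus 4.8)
The plan is to use the freedom in the \emph{sign} of $f$: only $|f|=|h|$ is prescribed, so I may write $f=\sigma\,|h|$ with a sign $\sigma(x)\in\{-1,+1\}$ of my choosing and then arrange that the primitive $F(x):=\int_c^x f$ stays uniformly small. Since $\|\pio{f}\|_{\Ch{\infty}}=\esup_{x\in I}|w(x)F(x)|$ and $f$ will be supported in $[a,b]$, I first record that $F\equiv0$ on $(c,a]$, that $F(x)=F(b)$ for $x\ge b$, and that, as $[a,b]\subset I$ forces $a>c$, condition \eqref{genw} with $p=\infty$ gives $M:=\|\chi_{(a,d)}\pio{w}\|_{L^\infty(I)}<\infty$. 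Hence it suffices to choose $\sigma$ so that $F(b)=0$ and $|F|\le\delta$ on $(a,b)$, where $\delta$ will be fixed so that $M\delta<\varepsilon$; then $\esup_{x\in I}|w(x)F(x)|\le M\delta<\varepsilon$.

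First I would reparametrize by the accumulated mass $G(x):=\int_a^x|h|$, which is absolutely continuous and nondecreasing with $G(a)=0$ and $G(b)=L:=\int_a^b|h|$ (the case $L=0$ being trivial, with $f:=0$). Then I would fix a triangular wave $\psi$ on $[0,L]$ of slope $\pm1$ and amplitude $\delta$ with $\psi(0)=0$; choosing $\delta=L/(2n)$ with $n$ so large that $M\delta<\varepsilon$ makes $L=n\cdot(2\delta)$ an integer number of periods of $\psi$, so that $\psi(L)=\psi(0)=0$. I set $f:=\psi'(G)\,|h|$ on $(a,b)$ and $f:=0$ off $[a,b]$. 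Because $G$ is absolutely continuous and $\psi$ is Lipschitz, $\psi\circ G$ is absolutely continuous with $(\psi\circ G)'=\psi'(G)\,G'=\psi'(G)|h|=f$ $\leb$-a.e., whence $F(x)=\int_a^x f=\psi(G(x))$. This yields at once $|F|\le\delta$ on $(a,b)$ and $F(b)=\psi(L)=0$, so that $\supp f\subset[a,b]$ and $\esup_{x\in I}|w(x)F(x)|\le M\delta<\varepsilon$.

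The main obstacle is the verification $|f|=|h|$ $\leb$-a.e. on $(a,b)$, i.e. that the sign factor has modulus one where it matters. Since $|\psi'|=1$ off the finitely many corners $\{k\delta:0\le k\le 2n\}$, the only trouble is the set $E:=G^{-1}(\{k\delta:0\le k\le 2n\})$ on which $\psi'(G)$ is undefined. I would dispose of it with the level-set principle: an absolutely continuous function has vanishing derivative $\leb$-almost everywhere on each of its level sets, since at a.e. point of a level set, which is a density point of that set, the difference quotient along the set is zero. Applying this to $G$ at each of the corner values and taking the union, $G'=|h|=0$ $\leb$-a.e. on $E$. Hence $f=0=|h|$ a.e. on $E$ no matter how $\sigma$ is assigned there, while $|f|=|h|$ a.e. on the complement because $|\psi'|=1$; for the same reason the identity $F=\psi\circ G$ used above is unaffected. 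Collecting the three properties gives the lemma.
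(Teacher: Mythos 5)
Your proof is correct and is essentially the paper's own construction in different clothing: the corners of your triangular wave pull back under $G$ to exactly the paper's partition points $\alpha_i$ (equal $|h|$-mass subintervals) and mass-bisecting points $\beta_i$, and the sign $\psi'(G)$ reproduces its alternating factor $\chi_{[\alpha_i,\beta_i]}-\chi_{(\beta_i,\alpha_{i+1})}$. The paper sidesteps your chain-rule and level-set technicalities by defining the partition directly, but both arguments are sound.
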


\begin{proof} Fix an arbitrary $\varepsilon>0$. Choose  $n>\frac{1}{\varepsilon}\left[\esup_{y\in [a,b]}w(y)\right]\int_a^b|h|$. Let $\{\alpha_i\}_{i=0}^{n}$ be partition of $[a,b]$ such that $\int_{\alpha_i}^{\alpha_{i+1}}|h|=\frac{1}{n}\int_a^b|h|$. For each $i\in\{0,\ldots,n-1\}$ we choose
 $\beta_i\in[\alpha_i,\alpha_{i+1}]$ with property $\int_{\alpha_i}^{\beta_i}|h|=\int_{\beta_i}^{\alpha_{i+1}}|h|$.
 Put
 $f:=|h|\sum_{i=0}^{n-1}(\chi_{[\alpha_i,\beta_i]}-\chi_{(\beta_i,\alpha_{i+1})})$. Then $|f|=|h|$ on $(a,b)$ and 
 \begin{align*}
\|\pio{f}\|_{\Ch{\infty}}&=\esup_{x\in (a,b]}w(x)\left|\int_a^x f\right|=\max_{0\le i\le n-1}\esup_{x\in (a_i,a_{i+1}]}w(x)\left|\int_{a_i}^x f\right|\\&\le\frac{1}{n}\int_a^b|h|\left[\esup_{y\in [a,b]}w(y)\right]<\varepsilon.  
 \end{align*} 
 \end{proof}
 
 \begin{cor}
  Let $w\in\eqref{genw}$ and $\ef{g}\in L^0(I)\setminus\{0\}$. Then there is the equality $\|\ef{g}\|_{(\Cs{\infty},\|\cdot\|_{\Ch{\infty}})'_\strong}=\infty$.
  \end{cor}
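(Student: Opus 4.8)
The plan is to exploit Lemma \ref{chasto} directly: for the given nonzero $\ef{g}$ I will produce test functions supported on a fixed compact subinterval of $I$ for which the product $\int_I|fg|$ stays bounded below by a fixed positive constant while $\|\pio{f}\|_{\Ch{\infty}}$ is driven to $0$. Recall that by the definition of the strong associated space,
$$\|\ef{g}\|_{(\Cs{\infty},\|\cdot\|_{\Ch{\infty}})'_\strong}=\sup_{\ef{f}\in\Cs{\infty}\setminus\{0\}}\frac{\int_I|fg|}{\|\pio{f}\|_{\Ch{\infty}}},$$
so it suffices to exhibit admissible $\ef{f}$ making this ratio diverge.

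First I would localize. Since $\ef{g}\not=0$, the set $\{x\in I:0<|g(x)|\}$ has positive $\leb$-measure, so after intersecting with the truncations $\{x:1/n\le|g(x)|\le n\}$ and a bounded subinterval I can fix a compact $[a,b]\subset I$ together with $h:=\chi_E$, where $E\subset(a,b)$ has positive measure and $|g|$ is bounded away from $0$ and $\infty$ on $E$. Then $h\in L^1([a,b])$ and
$$c_0:=\int_a^b|h||g|=\int_E|g|\in(0,\infty).$$
This truncation is precisely what guards against $g$ being unbounded or failing to be locally integrable; it is the only delicate point, and it is entirely elementary.

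Next I would feed this $h$ into Lemma \ref{chasto} with $\varepsilon=1/n$: for each $n$ I obtain $f_n\in\mathfrak{M}(I)$ with $\supp f_n\subset[a,b]$, $|f_n|=|h|$ on $(a,b)$, and $\|\pio{f_n}\|_{\Ch{\infty}}<1/n$. Because $f_n$ has compact support and $w\in\eqref{genw}$, the remark following the definitions gives $\ef{f_n}\in\Cs{\infty}$; moreover $f_n\not=0$ on the positive-measure set $E$, so $\ef{f_n}\in\Cs{\infty}\setminus\{0\}$ is admissible. Since $|f_n|=|h|$ on $(a,b)$ and $f_n$ vanishes outside $[a,b]$, we have $\int_I|f_ng|=\int_E|g|=c_0$, whence
$$\frac{\int_I|f_ng|}{\|\pio{f_n}\|_{\Ch{\infty}}}>\frac{c_0}{1/n}=nc_0\xrightarrow[n\to\infty]{}\infty.$$

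Finally, taking the supremum over $n$ (hence over all admissible $\ef{f}$) yields $\|\ef{g}\|_{(\Cs{\infty},\|\cdot\|_{\Ch{\infty}})'_\strong}=\infty$, as claimed. The entire argument is powered by Lemma \ref{chasto}; the single step that demands any thought is the selection of $h$, where one must secure a \emph{finite} positive value of $\int_a^b|h||g|$ that is independent of the subsequent rescaling, and this is exactly what the truncation above provides.
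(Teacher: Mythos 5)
Your proof is correct and follows essentially the same route as the paper: both apply Lemma \ref{chasto} on a compact subinterval $[a,b]$ where $g$ is nonzero on a set of positive measure, driving $\|\pio{f}\|_{\Ch{\infty}}$ below $\varepsilon$ while keeping $\int_I|fg|$ bounded away from zero. The only difference is that the paper simply takes $h\equiv 1$ (so the ratio is at least $\varepsilon^{-1}\int_a^b|g|$, which gives the conclusion whether or not that integral is finite), whereas your truncation to $h=\chi_E$ is a harmless extra precaution rather than a necessary step.
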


\begin{proof}
 Let $g\in\ef{g}\not=0$. There exists  $[a,b]\subset I$ such that $a<b$ and ${\cal L}^1((a,b)\cap \{x\in I:g(x)\not=0\})>0$. Fix an arbitrary $\varepsilon>0$.  By Lemma \ref{chasto} there exists $f\in \mathfrak{M}(I)$ with the properties: $\|\pio{f}\|_{\Ch{\infty}}<\varepsilon$, $\supp f\subset [a,b]$ and $|f|=1$ on $(a,b)$. Hence, 
 $$\frac{\int_{I}|fg|}{\|\pio{f}\|_{\Ch{\infty}}}\ge \frac{1}{\varepsilon}\int_a^b |g|.$$
 Thus, $\|\ef{g}\|_{(\Cs{\infty},\|\cdot\|_{\Ch{\infty}})'_\strong}=\infty$. 
 \end{proof}

The next two lemmas contain the key constructions for obtaining a criterion for an element to belong to the ``weak'' space associated with $\Ch{\infty}$. 

\begin{lem} \label{gevarlambdag} Let $w\in\eqref{weights}$.

$1$. If $\ef{g}\in (\Ch{\infty})'_\weak$ then $v\ef{g}\in L^1(I)$. If  $\ef{g}\in (\Cs{\infty},\|\cdot\|_{\Ch{\infty}})'_\weak$ and $v>0$ $\leb$-a.e. on $I$ then 
\begin{equation} \label{gegL1}
\|v\ef{g}\|_{L^1(I)}\le \|\ef{g}\|_{(\Cs{\infty},\|\cdot\|_{\Ch{\infty}})'_\weak}.
\end{equation}

$2$.  Let  $(a)$ $\ef{g}\in (\Ch{\infty})'_\weak$ and $A_{\ef{g}}:=\|\ef{g}\|_{(\Ch{\infty})'_\weak}$, or $(b)$ $v>0$ $\leb$-a.e. on $I$, $\ef{g}\in (\Cs{\infty},\|\cdot\|_{\Ch{\infty}})'_\weak$ and $A_{\ef{g}}:=\|\ef{g}\|_{(\Cs{\infty},\|\cdot\|_{\Ch{\infty}})'_\weak}$. Then
 $(\frac{1}{w}\ef{g})\cap BPV(I)\not=\emptyset$,  and  for any $\tilde g\in (\frac{1}{w}\ef{g})\cap BPV(I)$  the estimate
\begin{equation}\label{ocenkaslambda}
 \|\lambda_{\tilde g}\|\le \|v\ef{g}\|_{L^1(I)}+A_{\ef{g}}
\end{equation}
holds. 
\end{lem}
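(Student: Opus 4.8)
The plan is to settle Part $1$ with a single extremal test function and to reduce Part $2$ to the Riesz representation theorem by reparametrising the test functions through their Ces\`aro profiles $\phi:=w\int_c^{\cdot}f$. For Part $1$, fix $g\in\ef{g}$ and take $f:=v\,\mathrm{sgn}(g)$. Then $|f|=v$, so $\pio{f}\in\Cs{\infty}\subset\Ch{\infty}$ (indeed $\esup_x w(x)\int_c^x|f|=\esup_x w(x)\int_c^x v=1$ because $w(x)=[\int_c^x v]^{-1}$), and $\|\pio{f}\|_{\Ch{\infty}}=\rho(f)\le\rho(|f|)=1$. If $\ef{g}$ lies in $(\Ch{\infty})'_\weak$ or in $(\Cs{\infty},\|\cdot\|_{\Ch{\infty}})'_\weak$, then $\ef{f}\ef{g}\in L^1(I)$ by definition, whence $\int_I v|g|=\int_I fg<\infty$, i.e. $v\ef{g}\in L^1(I)$. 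In case $(b)$ the hypothesis $v>0$ $\leb$-a.e. forces $\ef{f}\ne0$ whenever $\ef{g}\ne0$, and then $\int_I v|g|=\int_I fg\le A_{\ef{g}}\|\pio{f}\|_{\Ch{\infty}}\le A_{\ef{g}}$, which is \eqref{gegL1}.

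For Part $2$, given $\phi\in C_c^1(I)$ I would reverse this and set $f:=D(\tfrac1w\phi)=v\phi+\tfrac1w D\phi$. Since $\tfrac1w\phi$ vanishes near $c$, one has $\int_c^x f=\tfrac{1}{w(x)}\phi(x)$, so $\|\pio{f}\|_{\Ch{\infty}}=\esup_x|w(x)\tfrac1{w(x)}\phi(x)|=\|\phi\|_\infty$; moreover $f$ is supported in a compact subinterval and $\int_c^x|f|$ is bounded, so $\pio{f}\in\Cs{\infty}\subset\Ch{\infty}$. Writing $G:=\tfrac1w g$ (which lies in $L^1_\loc(I)$, since $\chi_{[a,b]}\in\Cs{\infty}$ forces $g\in L^1_\loc(I)$) and using the pointwise identity $fg=\phi\,vg+(D\phi)\,G$ together with $v\ef{g}\in L^1(I)$ from Part $1$, I obtain
$$\int_I fg=\int_I \phi\,vg+\int_I (D\phi)\,G,$$
with both integrals on the right absolutely convergent. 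Bounding the left-hand side by $A_{\ef{g}}\|\pio{f}\|_{\Ch{\infty}}=A_{\ef{g}}\|\phi\|_\infty$ (legitimate in both cases, as $\pio{f}\in\Cs{\infty}\subset\Ch{\infty}$) and the first term on the right by $\|\phi\|_\infty\|v\ef{g}\|_{L^1(I)}$, I arrive, for every $\phi\in C_c^1(I)$, at
\begin{equation*}
\Big|\int_I (D\phi)\,G\Big|\le\big(A_{\ef{g}}+\|v\ef{g}\|_{L^1(I)}\big)\,\|\phi\|_\infty .
\end{equation*}

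The remaining and main step is to read bounded variation off this inequality. It says precisely that the distributional derivative of $G$ is bounded on $C_c^1(I)$ for the supremum norm; by density of $C_c^1(I)$ in $C_0(I)$ it extends to $C_0(I)$, and the Riesz representation theorem furnishes a finite signed Borel measure $\mu$ on $I$, representing it, with $\|\mu\|\le A_{\ef{g}}+\|v\ef{g}\|_{L^1(I)}$. Then $\tilde g(x):=\mu((c,x])$ belongs to $BPV(I)$ with $\lambda_{\tilde g}=\mu$ (by \cite[Theorem 5.13]{Leoni}) and $D\tilde g=DG$, so $\tilde g-G$ is $\leb$-a.e. constant; adjusting this constant places $\tilde g$ in $(\tfrac1w\ef{g})\cap BPV(I)$, proving the intersection is nonempty. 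Since any two representatives of $\tfrac1w\ef{g}$ in $BPV(I)$ share their right limits, hence the same measure, the bound $\|\lambda_{\tilde g}\|=\|\mu\|\le A_{\ef{g}}+\|v\ef{g}\|_{L^1(I)}$ holds for \emph{every} $\tilde g\in(\tfrac1w\ef{g})\cap BPV(I)$, which is \eqref{ocenkaslambda}. I expect the delicate points to be exactly that the bound against $C_c^1$ functions genuinely yields a \emph{measure} rather than a mere distribution, and that the canonical distribution-function representative must be matched to $\tfrac1w\ef{g}$ up to the correct additive constant.
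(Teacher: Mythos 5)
Your proof is correct and follows the paper's argument essentially verbatim: Part 1 via the extremal test function $v\,\mathrm{sgn}(g)$ (the paper tests against the family $hv$, $h\in L^\infty(I)$, which amounts to the same computation), and Part 2 via the substitution $f=D(\tfrac1w\phi)$, the resulting bound on $\phi\mapsto\int_I(D\phi)\tfrac1w g$, and the Riesz representation theorem on $C_0(I)$ (the paper extends the functional by Hahn--Banach where you invoke density of $C_c^1(I)$ in $C_0(I)$; both are fine). The only blemish is a harmless sign: from $\int_I(D\phi)G=\int_I\phi\,d\mu$ one gets $DG=-\mu$ while $D\tilde g=+\mu$, so the $BPV(I)$ representative of $\tfrac1w\ef{g}$ is a constant \emph{minus} $\tilde g$ rather than plus; since $\|-\mu\|=\|\mu\|$ this does not affect \eqref{ocenkaslambda}.
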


\begin{proof}
$1$. Since $\pio{v}\in \Ch{\infty}$ then $v\ef{g}\in L^1(I)$ for $\ef{g}\in (\Ch{\infty})'_\weak$.
Now let $v>0$ $\leb$-a.e. on $I$. Then for any  $f\in\ef{f}\in L^0(I)$ 
$$\|\ef{f}\|_{\Cs{\infty}}\le \|\pio{{\textstyle\frac{f}{v}}}\|_{L^\infty(I)},$$
and for $\ef{g}\in (\Cs{\infty},\|\cdot\|_{\Ch{\infty}})'_\weak$ the estimates 
\begin{align*}
\|\ef{g}\|_{(\Cs{\infty},\|\cdot\|_{\Ch{\infty}})'_\weak}&\ge\sup_{\ef{f}: \pio{\frac{f}{v}}\in L^\infty(I)}\frac{\left|\int_I \frac{f}{v}vg\right|}{\|\pio{\frac{f}{v}}\|_{L^\infty(I)}}=\sup_{h\in\ef{h}\in L^\infty(I)}\frac{\left|\int_I hvg\right|}{\|\ef{h}\|_{L^\infty(I)}}\\
&=\|v\ef{g}\|_{L^1(I)} 
\end{align*}
hold.

$2$. Fix an arbitrary $\phi\in C_c^1(I)$. Put 
 $f:=D(\frac{1}{w}\phi)=v\phi+\frac{1}{w} D\phi$. 
 Then  for $\ef{f}\ni f$  we have $\ef{f}\in L^1(I)$ and $\|\ef{f}\|_{\Ch{\infty}}=\max_{x\in I}|\phi(x)|$. If $v>0$ $\leb$-a.e. on $I$ then $\ef{f}\in \Cs{\infty}$. From $v\ef{g}\in L^1(I)$ we have  $\int_{I} |\phi vg|<\infty$ for $g\in\ef{g}$.  Hence, $\int_{I}|\frac{1}{w} g D\phi|<\infty$  and
 $$\frac{|\int_{I} \frac{1}{w} g D\phi|}{\max\limits_{x\in I}|\phi(x)|}\le \frac{|\int_{I} \phi vg|}{\max\limits_{x\in I}|\phi(x)|}+\frac{|\int_{I} f g|}{\|\ef{f}\|_{\Ch{\infty}}}\le\|v\ef{g}\|_{L^1(I)}+A_{\ef{g}}.$$
 
 For $\phi\in C_c^1(I)$ we put $\Lambda \phi:=\int_{I} \frac{1}{w} g D\phi$. By the Hahn\,--\,\,Banach theorem there exists an extension $\tilde \Lambda\in (C_0(I))^*$ of the functional $\Lambda$ with the estimate
 $$\|\tilde \Lambda\|_{(C_0(I))^*}\le \|v\ef{g}\|_{L^1(I)}+A_{\ef{g}}.$$

 By the Riesz theorem \cite[6.19]{Rudin} on the representation of a linear continuous functional on $C_0(I)$ there exists  a unique regular real Borel measure $\lambda$
such that $\|\lambda\|=\|\tilde \Lambda\|_{(C_0(I))^*}$ and $\tilde\Lambda \varphi=\int_{I}\varphi\,d\lambda$ for any $\varphi\in C_0(I)$.

Define $h_g(x):=\lambda(I\cap (-\infty,x])$, $x\in I$. Then $h_g\in BPV(I)$ and applying \cite[5.41]{Leoni}, we have
 $$\int_{I} \frac{1}{w} g D\phi=\tilde\Lambda \phi=\int_{I}\phi\,d\lambda=-\int_{I}h_g D\phi$$
 for any $\phi\in C_c^1(I)$.
 Hence,  $\frac{1}{w}g+h_g$ $\leb$-a.e. on $I$ 
coincides with a constant function. Therefore,  $(\frac{1}{w}\ef{g})\cap BPV(I)\not=\emptyset$, and $\lambda_{\tilde g}=\lambda_{h_g}=\lambda$ hold  for $\tilde g\in (\frac{1}{w}\ef{g})\cap BPV(I)$ (see \cite[5.14]{Leoni}). 
 \end{proof}

 \begin{lem}\label{intfgfinite} Let $w\in\eqref{weights}$,  $f\in\ef{f}\in \Ch{\infty}$, $g\in\ef{g}\in  L^\infty_\loc(I)$, $v\ef{g}\in L^1(I)$, $\int_{I}|fg|<\infty$, $\tilde g\in (\frac{1}{w}\ef{g})\cap BPV(I)\not=\emptyset$. Then
 \begin{equation}\label{ocenka}
\left|\int_{I} fg\right|\le 2\left(\|v\ef{g}\|_{L^1(I)}+\|\lambda_{\tilde g}\|\right)\|\ef{f}\|_{\Ch{\infty}}.  
 \end{equation}
 \end{lem}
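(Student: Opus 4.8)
The plan is to reduce \eqref{ocenka} to an integration-by-parts identity on a compact subinterval $[a,b]\subset I$ and then pass to the limit $a\to c+$, $b\to d-$. Set $F(x):=\int_c^x f$ and $\Phi:=Fw$. Since $f\in L^1_\loc([c,d))$ and, by \eqref{weights}, $\frac1w=\int_c^\cdot v$ is locally absolutely continuous, positive, and increasing to $+\infty$, both $F$ and $w$ are locally absolutely continuous, hence so is $\Phi$; being continuous, $\Phi$ satisfies $|\Phi(x)|=|w(x)\int_c^x f|\le\|\ef{f}\|_{\Ch{\infty}}$ for every $x\in I$. Differentiating gives $Dw=-vw^2$ $\leb$-a.e., so by the product rule $fw=D\Phi+Fvw^2$ $\leb$-a.e. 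Multiplying by $\tilde g$ and using $w\tilde g=g$ $\leb$-a.e.\ leads to the pointwise identity $fg=\tilde g\,D\Phi+\Phi vg$ $\leb$-a.e.\ on $I$, where I have used $Fvw^2\tilde g=(Fw)(vw\tilde g)=\Phi vg$.

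First I would integrate this identity over $[a,b]$ and apply integration by parts for functions of bounded variation (\cite[Chapter 5]{Leoni}) to the term $\int_a^b\tilde g\,d\Phi$, which is licit since $\Phi\in AC([a,b])$ and $\tilde g\in BPV(I)$:
$$\int_a^b fg\,dx=\tilde g(b)\Phi(b)-\tilde g(a)\Phi(a)-\int_{(a,b]}\Phi\,d\lambda_{\tilde g}+\int_a^b\Phi vg\,dx.$$
The two integral terms are controlled at once: since $|\Phi|\le\|\ef{f}\|_{\Ch{\infty}}$ one has $|\int_{(a,b]}\Phi\,d\lambda_{\tilde g}|\le\|\ef{f}\|_{\Ch{\infty}}\|\lambda_{\tilde g}\|$, and $|\int_a^b\Phi vg\,dx|\le\|\ef{f}\|_{\Ch{\infty}}\int_I v|g|=\|\ef{f}\|_{\Ch{\infty}}\|v\ef{g}\|_{L^1(I)}$. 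Because $\int_I|fg|<\infty$ and $v\ef{g}\in L^1(I)$, dominated convergence lets me send $a\to c+$, $b\to d-$, recovering $\int_I fg$ on the left-hand side.

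The main obstacle is the boundary term $\tilde g(b)\Phi(b)-\tilde g(a)\Phi(a)$. For the endpoint $d$ I would invoke the weight hypothesis $\lim_{b\to d-}\int_c^b v=\infty$: writing $s(x):=\int_c^x v$ one has $vw=s'/s$, so $\int_b^d vw\,dx=\int_{s(b)}^\infty\frac{ds}{s}=\infty$ for every $b\in I$; since $\int_I vw|\tilde g|\,dx=\int_I v|g|\,dx=\|v\ef{g}\|_{L^1(I)}<\infty$ and the limit $\tilde g(d-)$ exists ($\tilde g\in BPV(I)$), this forces $\tilde g(d-)=0$, whence $\tilde g(b)\Phi(b)\to0$. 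For the endpoint $c$ the limit $\tilde g(c+)$ also exists, and letting $a\to c+$, $b\to d-$ in $\tilde g(b+)-\tilde g(a+)=\lambda_{\tilde g}((a,b])$ yields $\tilde g(c+)=-\lambda_{\tilde g}((c,d))$, so $|\tilde g(c+)|\le\|\lambda_{\tilde g}\|$. Choosing $a\to c+$ through continuity points of $\tilde g$ and using $|\Phi(a)|\le\|\ef{f}\|_{\Ch{\infty}}$ then gives $\limsup_{a\to c+}|\tilde g(a)\Phi(a)|\le\|\lambda_{\tilde g}\|\,\|\ef{f}\|_{\Ch{\infty}}$. Summing the three contributions produces $|\int_I fg|\le(2\|\lambda_{\tilde g}\|+\|v\ef{g}\|_{L^1(I)})\|\ef{f}\|_{\Ch{\infty}}$, which is even sharper than \eqref{ocenka}.
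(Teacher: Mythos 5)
Your proof is correct, but it follows a genuinely different route from the paper's. The paper proves the estimate by approximation: it builds compactly supported functions $f_n$ (a continuous approximation of $f$ on $[\alpha_n,b_n]$ plus a tail $\theta_n w^\gamma v\chi_{[b_n,\beta_n]}$ chosen so that $\int_c^x f_n$ vanishes off a compact set), verifies $\sup_x w(x)|\int_c^x f_n|\le 2\|\ef{f}\|_{\Ch{\infty}}+o(1)$ and $\int_I f_ng\to\int_I fg$, and then applies the integration-by-parts formula of \cite[5.40]{Leoni} to each $F_n=w\int_c^\cdot f_n$, where no boundary terms arise because $\supp F_n$ is compact; this is where the hypothesis $\ef{g}\in L^\infty_\loc(I)$ is used, to control the approximation error $\|\pio{g\chi_{[\alpha_n,b_n]}}\|_{L^\infty(I)}\int_{\alpha_n}^{b_n}|f-\bar f_n|$. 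You instead integrate by parts directly on $[a,b]$ and control the boundary terms explicitly, which requires two observations the paper never needs: that $\tilde g(d-)=0$ (forced by $\int_b^d vw=\infty$ together with $\int_I vw|\tilde g|=\|v\ef{g}\|_{L^1(I)}<\infty$) and that $|\tilde g(c+)|=|\lambda_{\tilde g}(I)|\le\|\lambda_{\tilde g}\|$. Both observations are justified as you state them, the pointwise identity $fg=\tilde g\,D\Phi+\Phi vg$ is correct since $w\tilde g=g$ $\leb$-a.e.\ and $Dw=-vw^2$, and $|\Phi|\le\|\ef{f}\|_{\Ch{\infty}}$ everywhere because $\Phi$ is continuous. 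Your argument is shorter, bypasses the delicate cut-off construction, never uses the hypothesis $\ef{g}\in L^\infty_\loc(I)$ (which is in any case implied by $\tilde g\in BPV(I)$ and the continuity of $w$), and yields the marginally sharper constant $2\|\lambda_{\tilde g}\|+\|v\ef{g}\|_{L^1(I)}$ in place of $2(\|\lambda_{\tilde g}\|+\|v\ef{g}\|_{L^1(I)})$. The only cosmetic point to fix is that the boundary terms in the Stieltjes integration-by-parts formula should be written with one-sided limits $\tilde g(a+)$, $\tilde g(b+)$ (or $a$, $b$ restricted to continuity points of $\tilde g$, as you do at the end); this does not affect the limits you take.
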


\begin{proof} Fix $\gamma\in (0,1)$. For $n\in\Na$ we define
$$b_n:=\sup\Big\{x\in I:\frac{1}{w(x)}\le n\Big\},\ \ a_n:=\inf\Big\{x\in I:\frac{1}{w(x)}\ge \frac{1}{n}\Big\}.$$
Since $\frac{1}{w}$ is a continuous function, $\lim_{x\to d-}\frac{1}{w(x)}=\infty$ and $\lim_{x\to c+}\frac{1}{w(x)}=0$, then 
$\frac{1}{w(b_n)}=n$, $\frac{1}{w(a_n)}=\frac{1}{n}$.
Besides that, since $\{x\in I:\frac{1}{w(x)}\le n\}\subset \{x\in I:\frac{1}{w(x)}\le n+1\}$, then $b_n\le b_{n+1}$. If $b:=\lim_{n\to\infty}b_n<d$ then $\pio{v}\in L^1_\loc([c,d))$ implies $\infty=\lim_{n\to\infty} \frac{1}{w(b_n)}=\frac{1}{w(b)}<\infty$ and we get contradiction. Hence, $\lim_{n\to\infty}b_n=d$.
 Analogously, $a_n\downarrow c$ as $n\to\infty$.

 Let $n_0\in\Na$ be such that $a_{n_0}< b_{n_0}$.
For  $n\ge n_0$ we define $\alpha_n\in [a_n,b_n]$  such that $\frac{1}{w(\alpha_n)}=\min_{x\in[a_n,b_n]} \frac{1}{w(x)}$. Then  $\frac{1}{w(\alpha_n)}>0$ and $\alpha_n<b_n$. Show that  $\lim_{n\to\infty}\alpha_n=c$. Fix an arbitrary $a>c$. Since $a_n\downarrow c$ as $n\to\infty$ there exists $n_1>n_0$ such that $a_{n_1}<a$. Let $n_2>n_1$ and $\frac{1}{n_2}<\frac{1}{w(\alpha_{n_1})}$. Then for  $n>n_2$ we have $\alpha_n\in [a_n,a_{n_1}]$ because of  $\frac{1}{w(x)}\ge n_1>\frac{1}{w(a_{n_1})}$ for $x\ge b_{n_1}$. Hence, $\alpha_n<a$.

Since $\int_{\alpha_n}^{b_n}|f|<\infty$ then by \cite[3.14]{Rudin} for $n\ge n_0$ there exists a function $\bar f_n\in C_c((\alpha_n,b_n))$ such that $\int_{\alpha_n}^{b_n}|f-\bar f_n|\le \frac{1}{w(\alpha_n)n}(1+\|\pio{g\chi_{[\alpha_n,b_n]}}\|_{L^\infty(I)})^{-1}$.
 Let us choose $\beta_n\in (b_n,d)$, $\theta_n\in\{-1,1\}$ so that the equality $\theta_n\int_{b_n}^{\beta_n}w^\gamma v+\int_{\alpha_n}^{b_n} \bar f_n=0$ holds.  For 
 $$f_n:=\bar f_n\chi_{[\alpha_n,b_n]}+\theta_n w^\gamma v\chi_{[b_n,\beta_n]},\  n\ge n_0$$
 we have
 $$\int_c^x f_n=0,\ \ x\in (c,\alpha_n]\cup [\beta_n,d),$$
\begin{align*}
&\sup_{x\in (\alpha_n,b_n]}w(x)\left|\int_c^x f_n\right|=\sup_{x\in (\alpha_n,b_n]}w(x)\left|\int_{\alpha_n}^x (\bar f_n-f)+\int_c^x f-\int_c^{\alpha_n} f \right|\\
&\le \frac{1}{n}+2\sup_{x\in [\alpha_n,b_n]}w(x)\left|\int_c^x f\right|\le 2\|\ef{f}\|_{\Ch{\infty}}+\frac{1}{n}
\end{align*}
and 
\begin{align*}
&\sup_{x\in (b_n,\beta_n)}w(x)\left|\int_c^x f_n\right|\le\\
&\le\sup_{x\in (b_n,\beta_n)}w(x)\left[\left|\int_{\alpha_n}^{b_n} (\bar f_n-f)+\int_c^{b_n} f-\int_c^{\alpha_n} f\right|+\int_{b_n}^x w^\gamma v\right]\\
&\le 2\|\ef{f}\|_{\Ch{\infty}}+\frac{1}{n}+ \sup_{x\in (b_n,\beta_n)}\frac{w(x)[w(x)^{\gamma-1}-w(b_n)^{\gamma-1}]}{(1-\gamma)}\\
&\le 2\|\ef{f}\|_{\Ch{\infty}}+\frac{1}{n}+\frac{1}{(1-\gamma)n^\gamma}.
\end{align*}

Besides that,  
\begin{align*}
\left|\int_{I}fg-\int_{I}f_ng\right|&\le \int_c^{\alpha_n} |fg|+\|\pio{g\chi_{[\alpha_n,b_n]}}\|_{L^\infty(I)}\int_{\alpha_n}^{b_n}|f-\bar f_n|+\\
&\phantom{\hskip 18mm}+\int_{b_n}^\infty|fg|+\left|\int_{\beta_n}^\infty w^\gamma vg\right|\\
&\le \int_c^{\alpha_n} |fg|+\frac{1}{n^2}+\int_{b_n}^\infty|fg|+\frac{1}{n^\gamma}\|v\ef{g}\|_{L^1(I)}. 
\end{align*}
Thus, $\lim_{n\to\infty}\int_{I} f_n g=\int_{I} fg$. 

Now put $F_n(x):=w(x)\int_c^x f_n$, $x\in I$. Then $F_n\in AC_\loc(I)$, $\supp F_n$ is a compact in $I$ and $f_n=vF_n+\frac{1}{w} DF_n$ $\leb$-a.e. on $I$. By applying \cite[5.40]{Leoni}, we obtain 
$$\int_{I} f_ng=\int_{I} vgF_n +\int_{I} \frac{1}{w} g DF_n=\int_{I} vg F_n -\int_{I} F_n\,d\lambda_{\tilde g}.$$
Consequently, 
$$\left|\int_{I} f_n g\right|\le \left(\|v\ef{g}\|_{L^1(I)}+\|\lambda_{\tilde g}\|\right)\sup_{x\in I}w(x)\left|\int_c^x f_n\right| $$
and  \eqref{ocenka} follows by passing to the limit as $n\to\infty$. 
\end{proof}

 Now we can formulate criterion of belonging of an element $\ef{g}\in L^0(I)$ to the space $(\Ch{\infty})'_\weak$ and get two sided estimate on norm of the element  of the ``weak'' space in case $v>0$ $\leb$-a.e. on $I$.
 
 \begin{thm}
  Let $w\in\eqref{weights}$, $\ef{g}\in L^0(I)$. The following statements are equivalent:
 
  $(i)$ $\ef{g}\in (\Ch{\infty})'_\weak$;
  
  $(ii)$  $v\ef{g}\in L^1(I)$, $(\frac{1}{w}\ef{g})\cap BPV(I)\not=\emptyset$, $\ef{g}\in L^\infty(I)$, and $\chi_{(b,d)}\ef{g}=0$ for some $b\in I$.
 
  And if $v>0$ $\leb$-a.e. on $I$, then  $\|\ef{g}\|_{(\Ch{\infty})'_\weak}\approx \left(\|v\ef{g}\|_{L^1(I)}+\|\lambda_{\tilde g}\|\right)$, where $\tilde g\in (\frac{1}{w}\ef{g})\cap BPV(I)$.
\end{thm}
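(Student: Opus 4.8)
The plan is to prove the equivalence by establishing the two implications separately and then to extract the two-sided norm estimate from the quantitative bounds already contained in Lemmas \ref{gevarlambdag} and \ref{intfgfinite}. Those lemmas carry most of the analytic weight; the genuinely new work is to produce, from membership in $(\Ch{\infty})'_\weak$, the two pointwise conditions $\ef g\in L^\infty(I)$ and $\chi_{(b,d)}\ef g=0$, which are invisible to the Riesz-measure construction behind Lemma \ref{gevarlambdag}.

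For $(ii)\Rightarrow(i)$ I would first verify the absolute integrability $\int_I|fg|<\infty$ for every $f\in\ef f\in\Ch{\infty}$. This is exactly where the two pointwise conditions enter: since $\chi_{(b,d)}\ef g=0$ one may write $\int_I|fg|=\int_c^b|fg|$, and since $\ef g\in L^\infty(I)$ while $\ef f\in L^1_\loc([c,d))$ forces $\int_c^b|f|<\infty$, we get $\int_c^b|fg|\le\|\ef g\|_{L^\infty(I)}\int_c^b|f|<\infty$. With $\int_I|fg|<\infty$ in hand, and noting $\ef g\in L^\infty(I)\subset L^\infty_\loc(I)$, $v\ef g\in L^1(I)$, $(\frac1w\ef g)\cap BPV(I)\neq\emptyset$, Lemma \ref{intfgfinite} applies verbatim and gives \eqref{ocenka}. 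Hence $\ef g\in(\Ch{\infty})'_\weak$ together with the upper bound $\|\ef g\|_{(\Ch{\infty})'_\weak}\le 2(\|v\ef g\|_{L^1(I)}+\|\lambda_{\tilde g}\|)$.

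For $(i)\Rightarrow(ii)$, the conditions $v\ef g\in L^1(I)$, $(\frac1w\ef g)\cap BPV(I)\neq\emptyset$ and $\|\lambda_{\tilde g}\|\le\|v\ef g\|_{L^1(I)}+A_{\ef g}$ are precisely Lemma \ref{gevarlambdag}(1) and (2a). The remaining two conditions I would obtain by contradiction through sign-balancing in the spirit of Lemma \ref{chasto}, using that membership in the weak associate space requires $\ef f\ef g\in L^1(I)$ for every $\ef f\in\Ch{\infty}$. To force $\chi_{(b,d)}\ef g=0$: if $\ef g$ did not vanish near $d$, pick $b_n\uparrow d$ so that $\{x\in(b_n,b_{n+1}):g(x)\neq0\}$ has positive measure for each $n$, choose an $L^1$ weight $h_n$ supported there with $\int_{b_n}^{b_{n+1}}|h_ng|=1$, and apply Lemma \ref{chasto} on each $(b_n,b_{n+1})$ to get $f_n$ with $|f_n|=|h_n|$, $\int_{b_n}^{b_{n+1}}f_n=0$ and $\sup_x w(x)|\int_{b_n}^x f_n|\le1$. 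Then $f=\sum_n f_n$ satisfies $\int_c^x f=\int_{b_n}^x f$ on each $(b_n,b_{n+1})$ (completed intervals contribute $0$), so $\|\ef f\|_{\Ch{\infty}}\le1$ and $\ef f\in\Ch{\infty}$, while $\int_I|fg|=\sum_n1=\infty$, contradicting $\ef f\ef g\in L^1(I)$.

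The condition $\ef g\in L^\infty(I)$ is the main obstacle, because $w$ blows up as $x\to c+$, so a single oscillation over one interval cannot control $w(x)|\int_c^x f|$ near $c$; note also that $v\ef g\in L^1(I)$ alone does not force boundedness (it permits $g\sim w^{1-\alpha}$ near $c$ for $0<\alpha<1$). If $\ef g\notin L^\infty(I)$, then infinitely many annuli $A_k:=\{2^{k}<|g|\le2^{k+1}\}$ have positive measure; I would set $|h|=(2^{k}\,\leb(A_k))^{-1}$ on each such $A_k$, so that $\int_{A_k}|h||g|\ge1$ while $\int_I|h|\le\sum_k2^{-k}<\infty$. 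To control $\|\ef f\|_{\Ch{\infty}}$ I would partition $(c,b)$ into weight-adapted annuli $J_m=(c_m,c_{m-1})$ with $c_m\downarrow c$ and $\esup_{J_m}w<\infty$, apply Lemma \ref{chasto} to $h|_{J_m}$ with a partition fine enough that $\sup_{x\in J_m}w(x)|\int_{c_m}^x f|\le2^{-m}$ and $\int_{J_m}f=0$, and set $f=\sum_m f_m$. Since $\int_c^{c_m}|f|\le\int_I|h|<\infty$ and $\int_c^{c_m}f=0$, for $x\in J_m$ we get $\int_c^x f=\int_{c_m}^x f$, whence $\|\ef f\|_{\Ch{\infty}}\le1$ and $\ef f\in\Ch{\infty}$, while $\int_I|fg|\ge\sum_k\int_{A_k}|h||g|=\infty$, again a contradiction. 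Finally, for the norm equivalence when $v>0$ $\leb$-a.e., the upper bound is the one produced in $(ii)\Rightarrow(i)$; for the lower bound I would use $\Cs{\infty}\subset\Ch{\infty}$ with the same norm to get $\|\ef g\|_{(\Cs{\infty},\|\cdot\|_{\Ch{\infty}})'_\weak}\le A_{\ef g}$, hence $\|v\ef g\|_{L^1(I)}\le A_{\ef g}$ by \eqref{gegL1} and $\|\lambda_{\tilde g}\|\le\|v\ef g\|_{L^1(I)}+A_{\ef g}\le2A_{\ef g}$ by \eqref{ocenkaslambda}, giving $\|v\ef g\|_{L^1(I)}+\|\lambda_{\tilde g}\|\le3A_{\ef g}$.
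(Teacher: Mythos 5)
Your proposal is correct and follows essentially the same route as the paper: $(ii)\Rightarrow(i)$ by checking $\int_I|fg|<\infty$ and invoking Lemma \ref{intfgfinite}, and $(i)\Rightarrow(ii)$ by combining Lemma \ref{gevarlambdag} with two contradiction arguments built from Lemma \ref{chasto} (disjointly supported, zero-mean blocks accumulating at $d$ to force $\chi_{(b,d)}\ef{g}=0$, and at $c$ to force $\ef{g}\in L^\infty(I)$), finishing with the same $3\|\ef{g}\|_{(\Ch{\infty})'_\weak}\ge\|v\ef{g}\|_{L^1(I)}+\|\lambda_{\tilde g}\|$ bookkeeping. The only cosmetic difference is that where the paper simply cites the existence of $h\in L^1((c,b))$ with $\int_c^b|hg|=\infty$ (the $L^1$--$L^\infty$ duality), you construct such an $h$ explicitly via dyadic level sets of $g$.
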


 \begin{proof} $(ii)\Rightarrow (i)$. For $f\in\ef{f}\in \Ch{\infty}$ we have $\ef{f}\in L^1_{\loc}([c,d))$ and therefore
  $$\int_{I}|fg|\le\|\ef{g}\|_{L^\infty(I)}\int_c^b|f|<\infty.$$
  By applying Lemma \ref{intfgfinite} for $\tilde g\in (\frac{1}{w}\ef{g})\cap BPV(I)$ we get the estimate   $$\|\ef{g}\|_{(\Ch{\infty})'_\weak}\le 2\left(\|v\ef{g}\|_{L^1(I)}+\|\lambda_{\tilde g}\|\right)<\infty.$$

 $(i)\Rightarrow (ii)$. Denote $E:=\{x\in I: g(x)\not=0\}$. Assume that ${\cal L}^1((t,d)\cap E)>0$ for any $t\in I$.   Then there exists $\{[a_k,b_k]\}_1^\infty$ such that $b_k<a_{k+1}$ and $\int_{a_k}^{b_k}|g|>0$. Let us choose $\theta_k\in (0,\infty)$ so that the inequality $\theta_k\int_{a_k}^{b_k}|g|\ge 1$ holds. 
 By Lemma~\ref{chasto} there exists $f_k\in \mathfrak{M}(I)$ with properties: $\|\pio{f_k}\|_{\Ch{\infty}}<2^{-k}$, $\supp f_k\subset [a_k,b_k]$ and $|f_k|=\theta_k$ on $(a_k,b_k)$. 
 Then for the function
 $f:=\sum_{k=1}^\infty f_k$ we have $\|\pio{f}\|_{\Ch{\infty}}\le 1$
 and
 $$\int_{I}|fg|\ge \sum_{k=1}^\infty \theta_k \int_{a_k}^{b_k}|g|\ge\sum_{k=1}^\infty 1=\infty.$$
 This contradicts  $\ef{g}\in (\Ch{\infty})'_\weak$.
 Thus, there exists point $b\in I$ such that $\pio{g\chi_{(b,d)}}=0$.

 Assume that $\ef{g}\not\in L^\infty(I)$. Then there exists $h\in\ef{h}\in L^1((c,b))$ such that $\int_c^b |hg|=\infty$. Let $a_1:=b$ and $a_k\downarrow c$ as $k\to\infty$.
 By Lemma~\ref{chasto} there exists $f_k\in \mathfrak{M}(I)$ with properties: $\supp f_k\subset [a_{k+1},a_k]$, $\|\pio{f_k}\|_{\Ch{\infty}}<2^{-k}$ and $|f_k|=|h|$ on $(a_{k+1},a_k)$.
 Then for the function  
 $f:=\sum_{k=1}^\infty f_k$ we have $\|\pio{f}\|_{\Ch{\infty}}\le 1$ and
$$\int_{I}|fg|\ge \int_c^b|hg|=\infty.$$
This contradicts the relation $\ef{g}\in (\Ch{\infty})'_\weak$, that is $\ef{g}\in L^\infty(I)$.

By Lemma \ref{gevarlambdag}  we have $v\ef{g}\in L^1(I)$,  $(\frac{1}{w}\ef{g})\cap BPV(I)\not=\emptyset$. 
If $v>0$ $\leb$-a.e. on $I$ the statement $1$ of Lemma \ref{gevarlambdag} implies the estimate $3\|\ef{g}\|_{(\Ch{\infty})'_\weak}\ge \left(\|v\ef{g}\|_{L^1(I)}+\|\lambda_{\tilde g}\|\right)$ for $\tilde g\in (\frac{1}{w}\ef{g})\cap BPV(I)$. 
 \end{proof}

By using the results for the weighted Ces\`aro space,  we also can characterize the space $(\Cs{\infty},\|\cdot\|_{\Ch{\infty}})'_\weak$ in case $v>0$ $\leb$-a.e. on $I$. 
 
\begin{thm}
  Let $w\in\eqref{weights}$,  $v>0$ $\leb$-a.e. on $I$ and $\ef{g}\in L^0(I)$. The following statements are equivalent:
 
   $(i)$ $\ef{g}\in (\Cs{\infty},\|\cdot\|_{\Ch{\infty}})'_\weak$;
   
  $(ii)$ $v\ef{g}\in L^1(I)$,  $(\frac{1}{w}\ef{g})\cap BPV(I)\not=\emptyset$, and $\int_{I}v(t)\|\ef{g}\|_{L^\infty([t,d))}\,dt<\infty$.
 
 And   $\|\ef{g}\|_{(\Cs{\infty},\|\cdot\|_{\Ch{\infty}})'_\weak}\approx \left(\|v\ef{g}\|_{L^1(I)}+\|\lambda_{\tilde g}\|\right)$, where $\tilde g\in (\frac{1}{w}\ef{g})\cap BPV(I)$.
\end{thm}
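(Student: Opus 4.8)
The plan is to establish the two implications separately, recycling the constructions already set up for $(\Ch{\infty})'_\weak$ and supplying the one new ingredient special to $\Cs{\infty}$, namely the condition $\int_I v(t)\|\ef{g}\|_{L^\infty([t,d))}\,dt<\infty$; this replaces ``$\ef{g}\in L^\infty(I)$ with right-compact support'' from the previous theorem (and the latter indeed implies the former, consistently with $\Cs{\infty}\subset\Ch{\infty}$). Throughout I would abbreviate $G(t):=\|\ef{g}\|_{L^\infty([t,d))}$, which is non-increasing, and $V(x):=\int_c^x v=\frac{1}{w(x)}$, so that $\ef{f}\in\Cs{\infty}$ gives $\int_c^x|f|\le V(x)\,\|\ef{f}\|_{\Cs{\infty}}$ for every $x$.

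For $(ii)\Rightarrow(i)$ I would first note that $\int_I vG<\infty$ forces $G(t)<\infty$ for every $t\in I$ (as $v>0$ $\leb$-a.e.\ and $V>0$), whence $\ef{g}\in L^\infty_\loc((c,d])$, and $G(d-)=0$ since $\int_t^d v=\infty$. Taking $f\ge0$ without loss of generality and writing $\mu:=-dG\ge0$ for the associated Lebesgue--Stieltjes measure, so that $G(x)=\mu([x,d))$, Fubini gives
\[
\int_I|fg|\le\int_c^d|f(x)|G(x)\,dx=\int_{[c,d)}\Big[\int_c^s|f|\Big]\,d\mu(s)\le\|\ef{f}\|_{\Cs{\infty}}\int_I vG,
\]
where the last step uses $\int_c^s|f|\le V(s)\|\ef{f}\|_{\Cs{\infty}}$ together with a further application of Fubini. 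In particular $\ef{f}\ef{g}\in L^1(I)$ for every $\ef{f}\in\Cs{\infty}$, so Lemma~\ref{intfgfinite} applies (with $\tilde g\in(\frac{1}{w}\ef{g})\cap BPV(I)$) and yields $|\int_I fg|\le 2(\|v\ef{g}\|_{L^1(I)}+\|\lambda_{\tilde g}\|)\,\|\ef{f}\|_{\Ch{\infty}}$; passing to the supremum proves $(i)$ with the upper estimate $\|\ef{g}\|_{(\Cs{\infty},\|\cdot\|_{\Ch{\infty}})'_\weak}\le 2(\|v\ef{g}\|_{L^1(I)}+\|\lambda_{\tilde g}\|)$.

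For $(i)\Rightarrow(ii)$ the first two conditions, together with the lower bound $\|v\ef{g}\|_{L^1(I)}+\|\lambda_{\tilde g}\|\le 3\,\|\ef{g}\|_{(\Cs{\infty},\|\cdot\|_{\Ch{\infty}})'_\weak}$, come directly from statement~$1$ and case~$(b)$ of statement~$2$ of Lemma~\ref{gevarlambdag}. For the remaining condition I would argue that membership in the ``weak'' space entails, by definition, $\ef{f}\ef{g}\in L^1(I)$ for every $\ef{f}\in\Cs{\infty}$; since $\Cs{\infty}$ is an order ideal Banach function space \cite{BenSharp-1988}, this is exactly $\ef{g}\in(\Cs{\infty})'_\strong$, and by the known description of the associate space of the weighted Ces\`aro space \cite{KamKub,StepMZ2022} the latter is equivalent to $\int_I v(t)\|\ef{g}\|_{L^\infty([t,d))}\,dt<\infty$. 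The displayed chain above is precisely the easy (``$\le$'') half of this equivalence. Combining the two directions then yields the asserted $\|\ef{g}\|_{(\Cs{\infty},\|\cdot\|_{\Ch{\infty}})'_\weak}\approx\|v\ef{g}\|_{L^1(I)}+\|\lambda_{\tilde g}\|$.

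I expect the genuine difficulty to be the implication $(i)\Rightarrow\int_I vG<\infty$: one must upgrade the finiteness of each individual integral $\int_I|fg|$, $\ef{f}\in\Cs{\infty}$, to the integrability condition, i.e.\ produce, whenever $\int_I vG=\infty$, a single $\ef{f}\in\Cs{\infty}$ with $\int_I|fg|=\infty$. This is the ``level function'' subtlety of Ces\`aro-type duality; I would dispatch it through the cited characterization of $(\Cs{\infty})'_\strong$, but it can also be carried out by hand via a dyadic construction in the spirit of Lemma~\ref{chasto}, choosing $f$ comparable to $v$ on successive intervals and concentrated near points where $|g|$ nearly realizes its running supremum $G$.
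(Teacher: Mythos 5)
Your overall architecture coincides with the paper's: Lemma~\ref{gevarlambdag} (parts 1 and 2(b)) supplies $v\ef{g}\in L^1(I)$, the $BPV$ condition and the lower bound $\|v\ef{g}\|_{L^1(I)}+\|\lambda_{\tilde g}\|\le 3\|\ef{g}\|_{(\Cs{\infty},\|\cdot\|_{\Ch{\infty}})'_\weak}$; Lemma~\ref{intfgfinite} gives the converse implication and the upper bound; and the third condition in $(ii)$ is identified with membership in the associate space of the ordinary weighted Ces\`aro space. Two local differences. First, in $(ii)\Rightarrow(i)$ you prove the bound $\int_I|fg|\le\|\ef{f}\|_{\Cs{\infty}}\int_I vG$ by hand via a Stieltjes--Fubini computation; the paper simply quotes the equivalence $\int_I v(t)\|\ef{g}\|_{L^\infty([t,d))}\,dt<\infty\Leftrightarrow\ef{g}\in(\Cs{\infty})'_\weak$ from \cite[4.3]{AM}, \cite[Theorem 4]{StepMZ2022} and uses it in both directions. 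Your computation is correct and makes that half self-contained, which is a mild gain.

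The second difference is the one to watch. For $(i)\Rightarrow\int_I vG<\infty$ you propose to use only the qualitative information $\ef{f}\ef{g}\in L^1(I)$ for all $\ef{f}\in\Cs{\infty}$ and to upgrade it to $\ef{g}\in(\Cs{\infty})'_\strong$ by appealing to ``$\Cs{\infty}$ is an order ideal Banach function space''. That appeal is not justified: $\Cs{\infty}$ with a weight from \eqref{weights} fails axiom (P5) of Bennett--Sharpley (since $w(x)\to 0$ as $x\to d-$, one can make $\int_E|f|$ large while $\|\ef{f}\|_{\Cs{\infty}}$ stays bounded for $E$ of finite measure accumulating at $d$), so the Lorentz--Luxemburg-type theorem you are implicitly invoking does not apply off the shelf, and completeness of $\Cs{\infty}$ is nowhere verified. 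The fix is the one the paper uses, and it is strictly easier than what you attempt: you already know the \emph{norm} $\sup_{\ef{f}}|\int_I fg|/\|\ef{f}\|_{\Ch{\infty}}$ is finite, and since $\|\ef{f}\|_{\Ch{\infty}}\le\|\ef{f}\|_{\Cs{\infty}}$ for every $\ef{f}$, this gives $\|\ef{g}\|_{(\Cs{\infty})'_\weak}\le\|\ef{g}\|_{(\Cs{\infty},\|\cdot\|_{\Ch{\infty}})'_\weak}<\infty$ directly, whence $\int_I v(t)\|\ef{g}\|_{L^\infty([t,d))}\,dt<\infty$ by the cited characterization. With that one substitution your proof is complete; the ``level function subtlety'' you flag at the end dissolves, because the quantitative hypothesis does the work that the qualitative one cannot.
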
  
 
\begin{proof} At first,  $\int_{I}v(t)\|\ef{g}\|_{L^\infty([t,d))}\,dt<\infty$ is equivalent to  $\ef{g}\in (\Cs{\infty})'_\weak$ by \cite[4.3]{AM}, \cite[Theorem 4]{StepMZ2022}. 

$(ii)\Rightarrow (i)$. Since $\int_{I}v(t)\|\ef{g}\|_{L^\infty([t,d))}\,dt<\infty$ then  $\ef{g}\in L^\infty_\loc(I)$. Besides that, for $f\in\ef{f}\in \Cs{\infty}$ we have $\int_{I}|fg|<\infty$, and the estimate   
$$\|\ef{g}\|_{(\Cs{\infty},\|\cdot\|_{\Ch{\infty}})'_\weak}\le 2\left(\|v\ef{g}\|_{L^1(I)}+\|\lambda_{\tilde g}\|\right)<\infty.$$
 follows from Lemma \ref{intfgfinite}.
 
 $(i)\Rightarrow (ii)$. By Lemma \ref{gevarlambdag} we have $v\ef{g}\in L^1(I)$, $(\frac{1}{w}\ef{g})\cap BPV(I)\not=\emptyset$ and the estimate $3\|\ef{g}\|_{(\Cs{\infty},\|\cdot\|_{\Ch{\infty}})'_\weak}\ge \left(\|v\ef{g}\|_{L^1(I)}+\|\lambda_{\tilde g}\|\right)$ holds for  $\tilde g\in (\frac{1}{w}\ef{g})\cap BPV(I)$. Further, since $\|\ef{g}\|_{(\Cs{\infty})'_\weak}\le \|\ef{g}\|_{(\Cs{\infty},\|\cdot\|_{\Ch{\infty}})'_\weak}$, we obtain $\int_{I}v(t)\|\ef{g}\|_{L^\infty([t,d))}\,dt<\infty$. 
 \end{proof}

\end{document}